\newcommand{\nc}{\newcommand}
 \nc{\cl}{\centerline}
 \nc{\SL}{{\rm SL}}
 \nc{\hatQ}{{\hat Q}}
 \nc{\sgn}{{\rm sgn}}
 \nc{\seee}{\mathbb C}
 \nc{\varleq}{\preccurlyeq}
 \newcommand{\id}{{\rm id}}
 \nc{\hatlambda}{{\hat\lambda}}
 \nc{\daggerlambda}{{\lambda^\dagger}}
    \nc{\barr}{{\bar r}}
  \nc{\bart}{{\bar t}}
    \nc{\barsigma}{{\bar \sigma}}
\nc\diag{{\rm diag}}
\renewcommand{\vert}{{\,|\,}}
\nc{\hatL}{{\hat L}}
\nc{\barE}{{\bar   E}}
\nc{\D}{{\mathcal D}}
\nc{\E}{{\mathcal E}}
\nc{\F}{{\mathcal F}}
\nc{\FF}{{\mathcal F}}
\nc{\I}{{\mathcal I}}
\nc{\even}{{\rm e}}
\nc{\ep}{\epsilon}
\nc{\odd}{{\rm o}}
\nc{\Coker}{{\rm Coker}}
\nc{\olE}{{\overline E}}
\nc{\indBG}{{\rm ind}_B^G\,}
\nc{\indHG}{{\rm ind}_H^G\,}
\nc{\que}{{\mathbb Q}}
\nc{\barlambda}{{\bar\lambda}}
\nc{\barmu}{{\bar\mu}}
\nc{\barnu}{{\bar\nu}}
\nc{\bartau}{{\bar\tau}}
\nc{\barm}{{\bar m}}
\nc{\divind}{{\rm div.ind}}
\nc{\tl}{{\tilde{\lambda}}}
\nc{\dar}{\downarrow}
\nc{\eno}{{\mathbb N}_0}
\nc{\Sym}{{\rm Sym}}
\nc{\Symm}{{\rm Sym}}
\newcommand{\q}{\quad}
\newcommand{\de}{\delta}
\renewcommand{\mod}{{\rm mod}}
\newcommand{\bs}{\bigskip}
\renewcommand{\vert}{\,|\,}
\renewcommand{\sgn}{{\rm sgn}}
\renewcommand{\vert}{\,|\,}
\newcommand{\zed}{{\mathbb Z}}
\newcommand{\Hom}{{\rm Hom}}
\renewcommand{\mod}{{\rm mod}}
\newcommand{\GL}{{\rm GL}}
\renewcommand{\mod}{{\rm{mod}}}
\nc{\geom}{{\rm geom}}
\nc{\rep}{{\rm rep}}
\newcommand{\ch}{{\rm ch\,}}
\newtheorem{definition}{Definition}[section]
\newtheorem{proposition}[definition]{Proposition}
\newtheorem{theorem}[definition]{Theorem}
\newtheorem{lemma}[definition]{Lemma}
\newtheorem{corollary}[definition]{Corollary}
\newtheorem{example}[definition]{Example}
\newtheorem{remark}[definition]{Remark}
\begin{document}

\centerline{\bf A Clebsch-Gordan decomposition  in positive characteristic}

\bigskip

\centerline{Stephen Donkin and Samuel Martin}

\bigskip

{\it Department of Mathematics, University of York, York YO10 5DD\\  \\
and
\\
Earlham Institute,
Norwich Research Park,
Norwich,
NR4 7UZ,
UK
}\\

\medskip

\centerline{\tt stephen.donkin@york.ac.uk \hskip 50pt  samuel.martin@earlham.ac.uk}

\bs

\centerline{3  April    2018}

\bs\bs\bs

\section*{Abstract}   Let $G$ be the special linear group of degree $2$ over an algebraically closed field $K$. Let $E$ be the natural module and $S^rE$ the $r$th symmetric power.  We consider here, for $r,s\geq  0$, the tensor product of $S^rE$ and the dual of $S^sE$. In characteristic zero this tensor product decomposes according to the Clebsch-Gordan formula. We consider here the situation when $K$ is a field of positive characteristic.  We show that each  indecomposable component occurs with multiplicity one and identify which modules  occur  for given $r$ and $s$.

\section*{Introduction}

\q Let $K$ be an algebraically closed field of characteristic $p>0$.  Let $G$ be the  special linear group over $K$ of degree $2$, regarded as a linear algebraic group.

\q Weights and weight spaces will be computed with respect to the maximal torus $T$ of $G$ consisting of diagonal matrices. More precisely, given a rational $T$-module $V$ we have the weight space decomposition $V=\bigoplus_{r\in \zed} V^r$, where 
$$V^r=\{v\in V \vert 
\left(\begin{matrix}
t&0\cr
0&t^{-1}
\end{matrix}
\right) =t^r v \  \hbox{ for all } 0\neq t\in K
\}.$$

The space $V^r$ is the $r$-weight space, its dimension is the multiplicity of $r$ as a weight of $V$ and its elements are the vectors of weight $r$.
  The character of a finite dimensional rational $T$-module $V$ is the Laurent polynomial $\sum_{r\in \zed} (\dim V^r) x^r$.

\q Let $E$ be the natural  $G$-module (of column vectors of length $2$).   For $r\geq 0$, we have the $r$th symmetric power $S^rE$, which we also denote $\nabla(r)$. The module $\nabla(r)$ has an interpretation as an induced module, in the sense of algebraic group theory. The dual module $\Delta(r)$ is the corresponding Weyl module.  The $G$-socle $L(r)$ of $\nabla(r)$ is a simple module with highest weight $r$,  and indeed the modules $L(r)$, $r\geq 0$, form a complete set of pairwise non-isomorphic simple rational $G$-modules.   

\q By a good filtration of a rational $G$-module $V$ we mean a filtration $0=V_0\leq V_1\leq \cdots \leq V$ such that $V=\bigcup_{i=0}^\infty V_i$ and, for each $i>0$,  the module $V_i/V_{i-1}$ is either $0$ or isomorphic to $\nabla(r_i)$, for some $r_i\geq 0$. For a finite dimensional rational $G$-module with a good filtration and $r\geq 0$,  the cardinality of the set $\{i>0 \vert V_i/V_{i-1} \cong \nabla(r)\}$ is independent of the choice  of good filtration and we denote it $(V:\nabla(r))$.

\q The purpose of this paper is to identify the  indecomposable summands  of $\nabla(r)\otimes \Delta(s)$, for $r,s\geq 0$  and to identify which summands occur for given $r$ and $s$.   By duality it is enough to consider the case $r\geq s$.  We introduce our key notion.

\begin{definition}  An indecomposable summand of a $G$-module $\nabla(r)\otimes \Delta(s)$, with $r\geq s\geq 0$, will be called an indecomposable Clebsch-Gordan module.  An arbitrary finite dimensional rational $G$-module will be a called a Clebsch-Gordan module if each indecomposable summand is a Clebsch-Gordan module.
\end{definition}

\q  A finite dimensional rational $G$-module $V$ such that both $V$ and its dual admit a good filtration is called a tilting module. For each $m\geq 0$ there is an indecomposable tilting module $T(m)$ such that $m$ is the highest weight of $T(m)$ and occurs with multiplicity one. The modules $T(m)$, $m\geq 0$, form a complete set of pairwise non-isomorphic indecomposable tilting modules.

\q We shall show that each tilting module $T(m)$ is a Clebsch-Gordan module. Furthermore, we obtain all indecomposable Clebsh-Gordan modules from the tilting modules in the following way. We  write $m$ in the form $m=(p^N-1)+\sigma$, where $p^N-1 \leq m< p^{N+1}-1$. We write $\sigma$ in its base $p$ expansion $\sigma=\sum_{i=0}^N p^i\sigma_i$ and set $S(m)=\{ i \vert 0\leq i <N, \sigma_i\neq 0\}$.  For each subset $I$ of $S(m)$ 
we define a quotient $T(m)_I$ of $T(m)$. We show that the modules $T(m)_I$, as $m$ and $I$ vary, form a complete set of pairwise non-isomorphic indecomposable Clebsch-Gordan modules. We show that  the multiplicity of an indecomposable summand of a module of the form $\nabla(r)\otimes \Delta(s)$ is at most one and explicitly describe when a module $T(m)_I$ appears as a summand.

\q In  the case $r=s$, and $p>2$,  the condition for $\nabla(2)=\Delta(2)$ to be a summand of $\nabla(r)\otimes \Delta(s)$ was obtained by Goodbourn, \cite[Theorem 4.8]{Oliver}.

\q Two other versions  of the \lq\lq Clebsch-Gordan problem" are available. In \cite{DoHe}, Doty and Henke  give a decomposition of the tensor product of simple modules $L(r)\otimes L(s)$, as a direct sum of indecomposable modules (which are \lq\lq twisted" tilting modules, cf. \cite{DattDonkin}).  In \cite{Cavallin},  Cavallin describes  a decomposition of the ${\rm GL}_2(K)$-module $S^rE\otimes S^sE$,  as a direct sum of indecomposable modules (taking advantage of the fact that $S^rE\otimes S^sE$ is injective in the polynomial category).

\q For terminology and background results not explained here the reader may consult the book by Jantzen, \cite{RAG}.

\section{Decomposing the tilting modules $Y(r)$}

\q We write $\chi(r)$ for the character of $\nabla(r)$, $r\geq 0$.    We note that an indecomposable summand occurs at most once in our modules of interest.  For finite dimensional rational modules $V,W$ with $W$ indecomposable we write $(V\vert W)$ for the multiplicity of $W$ as a summand of $V$.

\begin{remark} Let $V$ be an indecomposable module.  For  $r\geq s\geq 0$, the multiplicity $(\nabla(r)\otimes \Delta(s) \vert V)$ is at most one.  This may be seen in the following way. The module $\nabla(r)\otimes \Delta(s)$ has a good filtration by \cite[Lemma 3.3]{Alison}.  The character $\chi(r)\chi(s)$ of $\nabla(r)\otimes \Delta(s)$ is,  according to  the usual Clebsch-Gordan formula, $\sum_{0\leq i\leq s} \chi(r+s-2i)$. Hence we have $(\nabla(r)\otimes \Delta(s): \nabla(t))\leq 1$, for $t\geq 0$.  If $(\nabla(r)\otimes \Delta(s) \vert V)\geq 2$ then $\nabla(r)\otimes \Delta(s)$ is isomorphic to $V\oplus V\oplus V'$ for some module $V'$. Choosing $t$ such that $(V:\nabla(t))> 0$   and we get  
$$(\nabla(r)\otimes \Delta(s):\nabla(t))=2(V:\nabla(t))+(V':\nabla(t))\geq 2$$
 a contradiction.
\end{remark}

\q For a non-negative integer $r$ we set 

$$Y(r)=\begin{cases}\nabla(m)\otimes \Delta(m),   & {\rm if}\   r=2m \hbox{ is even};\cr
\nabla(m+1)\otimes \Delta(m) ,  & {\rm if}\   r=2m+1 \hbox{ is odd.}
\end{cases}$$

\q The module $Y(r)$ is,    a tilting module, e.g., by \cite[Lemma 1.2]{Sam}.    Note that the character of $Y(r)$ is given by 
$$\ch Y(r)=\begin{cases}\chi(r)+\chi(r-2)+\cdots+\chi(0), & \hbox{ if } r \hbox{ is even};\cr
\chi(r)+\chi(r-2)+\cdots+\chi(1), & \hbox{ if } r \hbox{ is odd.}   
\end{cases}
\eqno{(*)}
$$

\begin{remark} The tilting module $T(r)$ with highest weight $r$ appears as a summand of $Y(r)$   and hence every   tilting module is a Clebsch-Gordan module.
\end{remark}

\q  A precise description of the non-negative integers $r,s$ such that $\nabla(r)\otimes \Delta(s)$ is tilting is to be found in \cite{Sam}.

\q In this section we determine a decomposition of $Y(r)$ as a direct sum of indecomposable modules, i.e., we determine, for $r,s\geq 0$, when  $(Y(r)\vert T(s))$ is non-zero (and hence $1$).  This generalises the result of Goodbourn  describing the condition for  $T(2)$ to occur as a summand of $Y(r)$, for $p$ odd,   \cite[Theorem 4.8]{Oliver},  which has special significance for the theory of reductive pairs.

\q We shall use the notion of an admissible quadruple  to describe a direct sum decomposition of the tilting modules $Y(r)$, $r\geq 0$.

\q Our method is essentially  to consider the characters  of the modules $T(s)$ and express the character of $Y(r)$ in terms of these. Since the character of $T(s)$ is known (see   Proposition 1.3  below) and the character of $Y(r)$ is given by the usual Clebsch-Gordan formula we may obtain the result by inverting a matrix of $\nabla$-multiplicities in tilting modules.

\q We start with some notation. We write $\eno$ for the set of non-negative integers.   We write the base $p$ expansion of $\sigma \in \eno$ as $\sigma=\sum_{i\geq 0} p^i\sigma_i$ (with $0\leq \sigma_i\leq p-1$ and $\sigma_i=0$ for $i$ large)   or just $\sigma=\sum_{i=0}^N p^i\sigma_i$, if $\sigma < p^{N+1}$.  For a set $I$ of non-negative integers we define $\sigma_I=\sum_{i\in I}p^i\sigma_i$.

\q An element $r\in \eno$ determines  non-negative integers $N$ and $\sigma$ such that $p^N-1\leq r< p^{N+1}-1$ and 
$$r=(p^N-1)+\sigma \eqno(*).$$
We shall say that $(*)$ is the {\em standard expression}  for $r$.

\q We shall need the multiplicities of the module $\nabla(s)$ as a section in a good filtration of the module $T(r)$, for $r,s\geq 0$.  By  taking $q=1$ and restricting to ${\rm SL}_2(K)$ in \cite[3.4(3)]{q-Donk},  we obtain the following.

\begin{proposition} Let $r\in \eno$ with standard expression $r=(p^N-1)+\sigma$.   For $s\in \eno$ we have $(T(r):\nabla(s))\leq 1 $ and 
$(T(r):\nabla(s))\neq  0$ if and only if $s=r-2\sigma_I$ for some subset $I$ of $\{i\in \eno \vert    i\neq N,  \sigma_i\neq 0\}$.

\end{proposition}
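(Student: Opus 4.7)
The plan is a direct application of the general formula \cite[3.4(3)]{q-Donk}, which describes $(T(\lambda):\nabla(\mu))$ for tilting modules of the quantum $\GL_n$. Setting the quantum parameter $q=1$ specialises to the classical algebraic group setting, and taking $n=2$ gives the formula for $\GL_2$. A dominant $\GL_2$-weight $(\lambda_1,\lambda_2)$ restricts to the $\SL_2$-weight $r=\lambda_1-\lambda_2$; this restriction preserves the tilting and good-filtration structures, so the $\GL_2$-multiplicities $(T(\lambda):\nabla(\mu))$ transfer directly to $\SL_2$-multiplicities $(T(r):\nabla(s))$.

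Next I would translate the combinatorial indexing. In the reference, the nonzero $\nabla$-sections of $T(r)$ are parametrised by certain subsets of positions in the base $p$ expansion of the residual part $\sigma=r-(p^N-1)$. The top position $N$ is distinguished because the associated factor corresponds to the Steinberg piece $L(p^N-1)=\nabla(p^N-1)=T(p^N-1)$, which is rigid and contributes no additional $\nabla$-section on being "flipped". Only positions $i<N$ with $\sigma_i\neq 0$ can contribute, and choosing $I\subseteq\{i\in\eno : i\neq N,\ \sigma_i\neq 0\}$ yields precisely the section $\nabla(s)$ with $s=r-2\sigma_I$, the shift by $2\sigma_I$ reflecting the drop of the $\SL_2$-highest weight by twice the contribution of the selected positions.

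The multiplicity-one bound $(T(r):\nabla(s))\leq 1$ is then immediate from uniqueness of $p$-adic expansions: distinct subsets $I$ give distinct $\sigma_I$ and hence distinct values $s=r-2\sigma_I$, so each given $s$ is realised by at most one subset $I$. The principal obstacle is purely a bookkeeping one, namely verifying that the indexing conventions of \cite[3.4(3)]{q-Donk} match those used here---specifically, that under the specialisation $q=1$, $n=2$ and restriction to $\SL_2$, the excluded index is indeed the top position $N$ of the standard expression $r=(p^N-1)+\sigma$, and that the weight shift produced by a subset $I$ is exactly $2\sigma_I$ and not some reindexed combinatorial quantity.
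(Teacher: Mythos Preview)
Your proposal is correct and follows exactly the approach the paper takes: the paper does not prove Proposition~1.3 at all but simply states that it is obtained by taking $q=1$ and restricting to $\SL_2(K)$ in \cite[3.4(3)]{q-Donk}. Your write-up is in fact more detailed than the paper's, spelling out the $\GL_2\to\SL_2$ restriction, the reason the top index $N$ is excluded, and the injectivity $I\mapsto\sigma_I$ that yields the multiplicity-one bound; the paper leaves all of this implicit in the citation.
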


\begin{definition} By an admissible triple   we mean a triple    $(N,\sigma,\delta)$  of non-negative integers $N$, $\sigma$ and $\delta$ such that $\sigma,\delta < p^{N+1}-p^N$, such that $\sigma_i+\de_i\leq p-1$ for $0\leq i<N$ and such that  $\sigma_N+\de_N<p-1$. By an admissible quadruple we mean a quadruple   $(N,\sigma,\delta,I)$, where $(N,\sigma,\de)$ is an admissible triple and $I$ is a subset of $\{i\in \eno \vert \sigma_i\neq 0, i\neq N\}$
\end{definition}

\begin{definition}  Let $r,t\in \eno$ with $r\geq t$ and $r-t$ even.   We say that an admissible quadruple  $(N,\sigma,\delta,I)$ is an admissible quadruple for the pair  $(r,t)$ if 
$$t=(p^N-1)+\sigma- 2\sigma_I \eqno{(1)}$$
and 
$$r\equiv  (p^N-1)+\sigma+2\delta  \hskip 10pt   (\mod \  2p^{N+1}) \eqno{(2).}$$

 \q Less formally we shall  say that $(N,\sigma,\delta,I)$ is a solution for $(r,t)$.

\end{definition}

\begin{remark} We note that if $(N,\sigma,\de,I)$ is an  admissible quadruple for $(r,t)$  then,   by (1), $\sigma_I$, and hence $I$, is determined by the triple $(N,\sigma,\de)$.
\end{remark}

\q Our result on the decomposition of the tilting modules $Y(r)$ is the following.

\begin{theorem} Let $r,s\in \eno$. Then $(Y(r)\vert T(s))\neq 0$ if and only if there is an admissible triple  $(N,\sigma,\de)$ such that 
\begin{align*}r&\equiv (p^N-1)+\sigma+2\de \hskip 20pt \hbox{\rm mod}\ 2p^{N+1},   \hskip 10pt  {\rm and}\cr
s&=(p^N-1)+\sigma.
\end{align*}
\end{theorem}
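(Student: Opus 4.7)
\emph{Plan.}
My approach is to reduce the theorem to a combinatorial bijection via a character identity. Since $Y(r)$ is tilting, $\ch Y(r) = \sum_{s} (Y(r)\vert T(s))\,\ch T(s)$, and by the earlier remark each multiplicity is at most one. Expanding each $\ch T(s) = \sum_{I \subset J(s)} \chi(s - 2\sigma_I)$ via Proposition~1.3 and comparing with formula~$(*)$, the theorem becomes: \emph{for each $t$ with $0 \le t \le r$ and $t \equiv r \pmod{2}$, there is exactly one admissible quadruple $(N,\sigma,\delta,I)$ solving $(r,t)$; and for all other $t$, no admissible quadruple exists}. By Remark~1.5, admissible quadruples correspond bijectively to admissible triples (with the congruence of the theorem) via $s = (p^N-1)+\sigma$, so the multiplicity statement of the theorem follows once this bijective count is established.

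The ``no-solution'' half is straightforward from the admissibility bounds. From $\sigma_I \le p^N - 1$ and $\sigma \ge p^N \sigma_N$ one obtains $t = (p^N-1)+\sigma - 2\sigma_I \ge 0$; from the digit bounds $\sigma + \delta \le p^{N+1} - p^N - 1$ one obtains $s + 2\delta < 2p^{N+1}$, so the congruence $r \equiv s + 2\delta \pmod{2p^{N+1}}$ forces $r = s + 2\delta + 2p^{N+1}k$ for some integer $k \ge 0$, whence $r \ge s \ge t$; and $r \equiv t \pmod{2}$ is evident.

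For existence and uniqueness on the valid range, which is the heart of the proof, set $A = (r+t)/2$ and $B = (r-t)/2$. Writing $a_i = \sigma_i$, $b_i = \delta_i$, and $c_i = a_i$ if $i \in I$ else $0$, the equations of Definition~1.5 imply that the base-$p$ digits at positions $i < N$ of $(A+1) \bmod p^{N+1}$ equal $a_i + b_i - c_i$, with leading digit $1 + a_N + b_N$ at position $N$, while those of $B \bmod p^{N+1}$ equal $b_i + c_i$ at positions $i \le N$ (with $c_N = 0$). Since $c_i \in \{0, a_i\}$ and $a_i + b_i \le p - 1$, the digit pair $((A+1)_i, B_i)$ determines uniquely $a_i = |(A+1)_i - B_i|$, $b_i = \min((A+1)_i, B_i)$, and $i \in I \iff B_i > (A+1)_i$ for $i < N$; at position $N$ we set $a_N = (A+1)_N - 1 - B_N$, $b_N = B_N$, $c_N = 0$. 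All admissibility inequalities then reduce to the single condition $(A+1)_N \ge 1 + B_N$ at the top position.

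The main obstacle I anticipate is pinning down $N$ uniquely. Several positions may satisfy $(A+1)_N > B_N$ a priori, but not every such $N$ extends to a valid admissible quadruple: the equality $t + 1 = p^N + \sigma - 2\sigma_I$ (rather than a mere congruence) forces $t + 2 \le p^{N+1}$, so $N$ cannot be too small; conversely, once $N$ is fixed the residue $\delta \equiv (r-s)/2 \pmod{p^{N+1}}$ is determined, and its admissibility (the digit bounds against $\sigma$) excludes $N$'s that are too large. I expect the uniqueness argument to identify $N$ as the unique integer simultaneously satisfying these size and admissibility constraints, verified by a careful case analysis of the base-$p$ expansions of $r+1$ and $t+1$ to handle the interplay between $A$, $B$, and the modulus $p^{N+1}$.
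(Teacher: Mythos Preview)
Your reduction of the theorem to the existence–uniqueness statement for admissible quadruples is exactly the paper's strategy: both amount to the matrix identity $\sum_s (Y(r)\,|\,T(s))(T(s):\nabla(t)) = (Y(r):\nabla(t))$, which inverts to give the multiplicities once Proposition~1.8 supplies a unique factorisation through~$s$. Where you diverge is in the attack on Proposition~1.8 itself. The paper argues by induction on $r+t$, peeling off the bottom digits $r_0,t_0$ and working through five cases (A--E) according to their parities and relative sizes, each reducing to a smaller pair $(r',t')$. Your proposal instead reads the quadruple directly from the base-$p$ digits of $A+1=(r+t)/2+1$ and $B=(r-t)/2$, which is a genuinely different and more conceptual route.

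Your digit formulae are correct, and you rightly isolate the determination of $N$ as the one outstanding point --- but you are too pessimistic in anticipating a ``careful case analysis'': none is needed. The exact equation $t+1=p^N+\sigma-2\sigma_I$ combined with your identities $(A+1)\bmod p^{N+1}=P$, $B\bmod p^{N+1}=Q$, $P-Q=t+1$ forces $\lfloor (A+1)/p^{N+1}\rfloor=\lfloor B/p^{N+1}\rfloor$, i.e.\ the digits of $A+1$ and $B$ agree at every position above~$N$. Together with $(A+1)_N=1+\sigma_N+\delta_N>\delta_N=B_N$, this pins down $N$ as the \emph{highest} position at which the digits of $A+1$ and $B$ differ. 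Conversely, since $A+1-B=t+1>0$, such a position always exists and automatically has $(A+1)_N>B_N$; defining $\sigma,\delta,I$ by your formulae at that $N$ then gives an admissible quadruple, with the congruence~(2) following from $(A+1)+B-(P+Q)=2\bigl((A+1)-P\bigr)\in 2p^{N+1}\zed$. This single observation closes the gap and completes your argument with no further cases.

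The trade-off: the paper's inductive case analysis is longer and somewhat opaque, but entirely elementary. Your approach, once the identification of $N$ above is supplied, is shorter and yields an explicit closed-form description of the unique quadruple in terms of the digits of $(r+t)/2+1$ and $(r-t)/2$; it also makes the reformulation Theorem~1.7$'$ immediate, since the conditions there are precisely the digit bound $\max((A+1)_i,B_i)\le p-1$ that you already derived.
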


\q Note that in the above statement we have 
\begin{align*}s&=(p^N-1)+\sum_{i=0}^{N-1}p^i\sigma_i+p^N\sigma_N\cr
&\leq (p^N-1)+\sum_{i=0}^{N-1} p^i(p-1)+(p-2)p^N\cr
&=(p^N-1)+(p^N-1)+(p-2)p^N=p^{N+1}-2
\end{align*}
so that $(p^N-1)\leq s < p^{N+1}-1$ and the expression $s=(p^N-1)+\sigma$ in the Theorem is the standard expression.  Now we put $u=(r-s)/2$ and then the first condition in the theorem becomes simply $u_i=\de_i$, for $0\leq i\leq N$.  Hence we may express the Theorem in the following more usable form.

\bs

${\bf \, Theorem\ 1.7}^\prime$  \sl Let $r,s\in \eno$ with $r\geq s$ and $r-s$ even.   Put $u=(r-s)/2$ and express $s$  in standard form $s=(p^N-1)+\sigma$.  Then $(Y(r)\vert T(s))\neq 0$ if and only $\sigma_i+u_i\leq p-1$ for $0\leq i<n$ and $\sigma_N+u_N <  p-1$.

\rm

\bs

\q Our proof of the Theorem  is based on the following existence and uniqueness property.

\begin{proposition} For $r,t\in \eno$ with $r\geq t$ and $r-t$ even there exists a unique admissible quadruple  for $(r,t)$.
\end{proposition}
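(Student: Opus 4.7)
The plan is to reformulate the problem as a digit-theoretic statement about base-$p$ expansions and then prove it by a monotone sequence argument.

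Set $u=(r-t)/2$ and, for a candidate quadruple, $\beta_i=[i\in I]\in\{0,1\}$ (so $\beta_N=0$). The admissibility inequalities $\sigma_i+\delta_i\leq p-1$ ($i<N$) and $\sigma_N+\delta_N\leq p-2$ ensure that the sum $\delta+\sigma_I$ has no carries, with $i$-th base-$p$ digit $\delta_i+\beta_i\sigma_i\in[0,p-1]$ for $i\leq N$. Using $r=t+2u$, condition (2) rearranges to $u\equiv\delta+\sigma_I\pmod{p^{N+1}}$; since $0\leq\delta+\sigma_I<p^{N+1}$, this is equivalent to $u_i=\delta_i+\beta_i\sigma_i$ for $i<N$ and $u_N=\delta_N$. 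Introducing $e_i=u_i+\sigma_i(1-2\beta_i)\in[0,p-1]$ and $c=1+\sigma_N\in[1,p-1]$, condition (1) then becomes
$$M_N:=t+1+(u\bmod p^N)=p^N c+\sum_{i<N}p^i e_i,$$
with $c+u_N\leq p-1$. Conversely, from any such base-$p$ expansion of $M_N$ one recovers an admissible quadruple via $\sigma_N=c-1$, $\delta_N=u_N$, $\sigma_i=|e_i-u_i|$, $\delta_i=\min(u_i,e_i)$, $\beta_i=[e_i<u_i]$ for $i<N$, and $I=\{i:\beta_i=1\}$. Thus admissible quadruples for $(r,t)$ correspond bijectively to the choices of $N\geq0$ for which $M_N$ admits such an expansion.

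It remains to show that exactly one such $N$ exists. Consider the sequence $f(N)=M_N-p^N=t+1+(u\bmod p^N)-p^N$, for which $f(0)=t$ and $f(N+1)=f(N)-(p-1-u_N)p^N$. Because $u$ is finite, $u_N=0$ for all large $N$, so $f(N)\to-\infty$; and $f$ is non-increasing throughout. The admissibility requirement collapses to $f(N)\in[0,(p-1-u_N)p^N)$, which says precisely that $M_N\in[p^N,p^{N+1})$ with leading digit $c$ satisfying $c+u_N\leq p-1$. Let $N^*$ be the largest $N$ with $f(N)\geq 0$. The inequality $f(N^*+1)<0$ rearranges to $f(N^*)<(p-1-u_{N^*})p^{N^*}$, so $N^*$ works. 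For $N>N^*$ the lower bound $f(N)\geq 0$ already fails; for $N<N^*$, had the condition held, then $f(N+1)=f(N)-(p-1-u_N)p^N<0$, and by non-increase $f(N^*)\leq f(N+1)<0$, contradicting the choice of $N^*$. Thus $N^*$ is the unique admissible choice.

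The main obstacle is the digit-level bookkeeping needed to establish the equivalence in the second paragraph: unravelling the admissibility constraints into the no-carry form, identifying the correct auxiliary quantities $M_N$, $c$, $e_i$, and verifying that the reconstruction formulas yield a bona fide admissible quadruple satisfying (1) and (2). Once this equivalence is in place, the existence and uniqueness of $N^*$ follow immediately from the monotone nature of the sequence $f(N)$.
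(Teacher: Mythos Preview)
Your argument is correct and takes a genuinely different route from the paper's proof. The paper proceeds by a minimal-counterexample induction on $r+t$: after isolating the case $N=0$ in a separate lemma, it carries out a five-part case analysis (labelled A--E) on the residues $r_0,t_0$, in each case determining $\sigma_0,\delta_0,\epsilon_I$ and reducing conditions (1)--(2) to the analogous conditions for a strictly smaller pair $(r',t')$ or $(r'-1,t')$, etc. The paper also restricts to odd $p$, leaving $p=2$ to the reader.

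Your approach is non-inductive and avoids all of this casework. By recognising that the admissibility constraints force the sum $\delta+\sigma_I$ to be carry-free, you reduce the problem to a single base-$p$ inequality on $M_N=t+1+(u\bmod p^N)$, and the monotone sequence $f(N)$ pins down the unique admissible $N$ in one stroke. This is considerably shorter, handles $p=2$ uniformly, and even yields an explicit description of $N$ as the largest index with $f(N)\geq 0$. The paper's approach, by contrast, is more hands-on: it makes the values of $\sigma_0,\delta_0$ completely explicit in each case, which may be useful for computations, but at the cost of length. One minor phrasing point: when you say ``condition (2) rearranges to $u\equiv\delta+\sigma_I$'', you are of course also invoking condition (1) to substitute for $t$; it would be clearer to say so.
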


\q Given the above proposition the theorem follows in a straightforward manner.

\bs

\it Proof of  Theorem 1.7. \rm  

\medskip

\q Let $n$ be a non-negative integer.  We define the matrix $A=(a_{rs})_{0\leq r,s\leq n}$, where $a_{rs}=1$ if $r\geq s$, $r-s$ is even and there exists an admissible triple  $(N,\sigma,\de)$ with 
\begin{align*}r&\equiv (p^N-1)+\sigma+2\de \hskip 20pt \hbox{\rm mod}\ 2p^{N+1},   \cr
s&=(p^N-1)+\sigma
\end{align*}
and $a_{rs}=0$ otherwise. We define  the matrix $B=(b_{rs})_{0\leq r,s\leq n}$, where $b_{rs}=(T(r):\nabla(s))$. We consider the product matrix $AB=C=(c_{rs})_{0\leq r,s\leq n}$. 

\q We  have $c_{rt}=\sum_{s=0}^n a_{rs}b_{st}$, for $0\leq r,t\leq n$. Suppose $a_{rs}b_{st}\neq 0$. Then there exists an admissible triple $(N,\sigma,\de)$ such that 
\begin{align*}r&\equiv (p^N-1)+\sigma+2\de \hskip 20pt \hbox{\rm mod}\ 2p^{N+1},   \cr
s&=(p^N-1)+\sigma.
\end{align*}

\q Moreover, by Proposition 1.3, we have $t=(p^N-1)+\sigma-2\sigma_I$, for some subset $I$ of $\{i\in \eno \vert    i\neq N,  \sigma_i\neq 0\}$.  But then $(N,\sigma,\de,I)$ is an admissible quadruple for $(r,t)$.  Hence the value of $s$ is uniquely determined by this quadruple, by Proposition 1.8.  In particular there is exactly one such $s$ and we have 
$$c_{rt}=\sum_{s=0}^n a_{rs}b_{st}=\begin{cases}1, &\hbox{ if } r\geq t \hbox{ and } r-t \hbox{ is even};\cr
0, & \hbox{otherwise.}
\end{cases}$$

 Thus we have $c_{rt}=(Y(r):\nabla(t))$. 
 
 \q Note that the matrices $A,B,C$ are invertible.     Let $A'=(a_{rs}')_{0\leq r,s\leq n}$, where $a_{rs}'=(Y(r)\vert T(s))$.  The $(r,t)$ entry of $A'B$ is
\begin{align*}\sum_{s=0}^n (Y(r)\,|\,T(s))(T(s):\nabla(t))=(Y(r):\nabla(t))=c_{rt}.
\end{align*}

\q Hence we have $A'B=C=AB$ and therefore $A'=A$.  Hence $(Y(r): T(s))=a_{rs}$, as required.
\qed

\bs

\q  The remainder of this section is devoted to a proof of  Proposition 1.8.   In the analysis that follows we shall assume that $p$ is odd. We  leave  verifications in case $p=2$ to the interested reader. Note that if $(N,\sigma,\de,I)$ satisfies  (1) and the condition $r\equiv (p^N-1)+\sigma+2\de$ (mod $p^{N+1}$) then we have 
\begin{align*}r-((p^N-1)+\sigma+2\de)&\equiv r-((p^N-1)+\sigma-2\sigma_I) \hskip 10pt \hbox{(mod $2$)}\cr
&\equiv r-t\equiv 0   \hskip 20pt  \hbox{(mod $2$).}
\end{align*}
So  in fact we have the condition (2). Thus in what follows it is enough to work with (2) in the simplified form
$$r\equiv  (p^N-1)+\sigma+2\delta  \hskip 20pt   (\mod \  p^{N+1}).$$

We separate  out the cases in which $N=0$.

\bs

\begin{lemma} Suppose $r,t\in \eno$, $r\geq t$ with $r-t$ even. We define $u=(r-t)/2$.   If there exits an admissible quadruple  $(N,\sigma,\delta,I)$ for $(r,t)$  with $N=0$ then $t+u_0< p-1$.  Conversely,   if $t+u_0< p-1$  then there is a unique admissible quadruple     for $(r,t)$,  namely  $(0,t,u_0,\emptyset)$. 
\end{lemma}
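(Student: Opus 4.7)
The plan is to unpack the definition of an admissible quadruple in the special case $N=0$, after which both directions become straightforward arithmetic. With $N=0$, the size constraint $\sigma,\delta < p^{N+1}-p^N = p-1$ forces $\sigma$ and $\delta$ to be at most $p-2$, so they coincide with their own $0$th base-$p$ digits: $\sigma = \sigma_0$ and $\delta = \delta_0$. The final admissibility condition $\sigma_N+\delta_N < p-1$ becomes $\sigma+\delta < p-1$. Moreover, the set $\{i \in \eno \mid \sigma_i \neq 0, \ i \neq 0\}$ is empty since $\sigma < p$ has no digits above position $0$, so necessarily $I = \emptyset$ and $\sigma_I=0$. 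Under these reductions, condition (1) of Definition 1.6 becomes $t = \sigma$, and condition (2) (in its simplified form modulo $p^{N+1}$, as permitted in the discussion preceding the lemma) becomes $r \equiv \sigma + 2\delta \pmod{p}$.

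For the forward direction, suppose $(0,\sigma,\delta,I)$ is an admissible quadruple for $(r,t)$. By the observations above, $I = \emptyset$, $\sigma = t$, and $r \equiv t+2\delta \pmod{p}$. Since $r-t = 2u$, this gives $2u \equiv 2\delta \pmod{p}$; as $p$ is odd, $2$ is invertible modulo $p$, so $u \equiv \delta \pmod{p}$. Because $0 \leq \delta \leq p-2 < p$, this forces $\delta = u_0$. The admissibility condition $\sigma + \delta < p-1$ now reads $t + u_0 < p-1$, as claimed.

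For the converse and uniqueness, assume $t + u_0 < p-1$ and consider $(0, t, u_0, \emptyset)$. Both $t$ and $u_0$ are nonnegative and their sum is less than $p-1$, so each is at most $p-2$, verifying the size constraints, and the condition $\sigma_0 + \delta_0 = t + u_0 < p-1$ is exactly the final admissibility clause. Condition (1) holds trivially since $\sigma_I = 0$. For condition (2), we have $t + 2u_0 \equiv t + 2u = r \pmod{p}$ because $u - u_0$ is divisible by $p$, so the quadruple solves $(r,t)$. Uniqueness is immediate from the forward analysis: any admissible quadruple for $(r,t)$ with $N=0$ must have $I=\emptyset$, $\sigma = t$, and $\delta = u_0$.

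There is no real obstacle here; the argument is purely bookkeeping once one observes that $N=0$ collapses the base-$p$ structure to a single digit and forces $I=\emptyset$. The only point requiring small care is invoking the odd-$p$ hypothesis when dividing $2u \equiv 2\delta \pmod{p}$ by $2$, which is consistent with the standing assumption made just before the lemma.
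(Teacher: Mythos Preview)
Your treatment of the $N=0$ case is correct and matches the paper's approach, but there is a genuine gap in the uniqueness argument. The converse of the lemma asserts that $(0,t,u_0,\emptyset)$ is the unique admissible quadruple for $(r,t)$ --- unique among \emph{all} admissible quadruples, not just among those with $N=0$. Your final sentence (``Uniqueness is immediate from the forward analysis: any admissible quadruple for $(r,t)$ with $N=0$ must have $I=\emptyset$, $\sigma=t$, and $\delta=u_0$'') only rules out other $N=0$ solutions. You have not excluded the possibility of an admissible quadruple $(N,\sigma,\delta,I)$ for $(r,t)$ with $N>0$.

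This is not a formality that can be patched in a line; the paper devotes a full paragraph to it. Assuming such a quadruple with $N>0$ exists, one rewrites condition (1) as $1+t+\sum_{i\in I}p^i\sigma_i = p^N + \sum_{i\in\bar I}p^i\sigma_i$ and argues that $0\in I$ (otherwise the left side is below $p^N$, since $t\leq t+u_0<p-1$). Reducing modulo $p$ gives $\sigma_0=p-1-t$, and then condition (2) modulo $p$ forces $\delta_0=u_0+1+t$, whence $\sigma_0+\delta_0=p+u_0$, violating $\sigma_0+\delta_0\leq p-1$. Without this step, the lemma cannot serve its purpose in the proof of Proposition~1.8, where it is invoked precisely to dispose of the case $t+u_0<p-1$ entirely so that only $N>0$ need be considered in the remaining cases.
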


\begin{proof} Suppose $(0,\sigma,\de,I)$ is a solution for $(r,t)$. Then $I=\emptyset$, $\sigma=\sigma_0$, $\de=\de_0$ and $t=\sigma_0$ from (1).  Moreover, (2) gives $u_0=\de_0$ so that $t+u_0=\sigma_0+\de_0<p-1$.

\q Now suppose that $t+u_0<p-1$. Then $(0,\sigma,\de,I)$ is a solution if and only if $I=\emptyset$, $t=\sigma=\sigma_0$ and $r=\sigma-2\de_0$ (mod $p$), so $\de_0=u_0$. Hence $(0,t,u_0,\emptyset)$ is the unique solution with $N=0$.

\q Now suppose, for a contradiction, that we have a solution $(N,\sigma,\de,I)$, with $N>0$. From (1) we get
$$1+t+\sum_{i\in I} p^i\sigma_i=p^N+\sum_{i\in {\bar I}} p^i\sigma_i$$
where ${\bar I}$ is the complement of $I$ in $\{0,1,\ldots,N\}$. We must have $0\in I$, for otherwise the left hand side is less than $p^N$. Taking the equation modulo $p$ gives $1+t+\sigma_0\equiv 0$ and hence $1+t+\sigma_0=p$.  From (2) we get $r\equiv -1-1-t+2\de_0$ (mod  $p$), and subtracting $t$ we get $2u\equiv -2-2t+2\de_0$ and hence $u_0\equiv -1-t+\de_0$ (mod  $p$) and hence $\de_0=u_0+1+t$. But now $\sigma_0+\de_0=p-1-t+u_0+1+t=u_0+p$ and the requirement $\sigma_0+\de_0\leq p-1$ is not satisfied.
\end{proof}

\it Proof of  Proposition 1.8.

\rm
\bs

\q Suppose the result is false and that $r+t$ is minimal among all cases in which either uniqueness or existence fails.  The proof is divided into several cases. We investigate a  possible solution $(N,\sigma,\delta,I)$ with $N>0$, so that   (1) and (2) are satisfied with a subset $I$ of $\{0,\ldots,N-1\}$.  By Lemma 1.9, we only need to consider cases in which $N>0$. We write $\sigma=\sigma_0+p\sigma'$, $\delta=\delta_0+p\delta'$. Then we have $\sigma_I=\ep_I\sigma_o+p\sigma'_{I'}$, where $I'$ is the subset $\{i-1\vert 0\neq i\in I\}$ of $\{0,\ldots,N-2\}$ and where $\ep_I=1$ if $0\in I$ and  $\ep_I=0$ if not.  We shall see that $\ep_I,\sigma_0,\de_0$ are determined by (1) and (2) and that $(N,\sigma,\de,I)$ is a solution for $(r,t)$ if and only if $(N-1,\sigma',\de',I')$ is an admissible quadruple  for the  smaller pair $(r',t')$, where $r = r_0 + pr'$ and $t = t_0 + pt'$. The result then follows by existence and uniqueness of a quadruple  for $(r',t')$.

\bs

Case A:  \    $t_0=p-1$.   

\medskip

\q   From (1) we have $-1\equiv -1+\sigma-2\sigma_I$ (mod $p$) and therefore $\sigma_0=0$. Furthermore we have 
$$t =pt'+ p-1=(p^N-1)+\sigma-2\sigma_I$$
so $pt'=p^N-p+p\sigma'-2p\sigma'_{I'}$, i.e.,
$$t'=p^{N-1}-1+\sigma'-2\sigma'_{I'}.\eqno{(3)}$$
From (2) we have
$r\equiv -1+\sigma_0+2\de_0$ (\mod  \, $p$),  i.e., $2\de_0\equiv r+1$ (\mod  \, $p$).  We have three cases to consider: (i) $r_0=p-1$; (ii) $r_0$ odd; and (iii) $r_0$ even, $r_0\neq p-1$.

\medskip

(i) If $r_0=p-1$ then   $\de_0=0$. From (2) we have 
$$r\equiv (p^N-1)+\sigma+2\delta \hskip 20pt ({\rm mod} \, p^{N+1})$$
so we have $pr'\equiv p^N-p+p\sigma'+2p\delta' \ ({\rm mod} \, p^{N+1})$  i.e., 
$$r'\equiv p^{N-1}-1+\sigma'+2\delta'  \hskip 20pt ({\rm mod} \, p^N).\eqno{(4)}$$

\q By minimality (3) and (4) have a unique solution $(N-1,\sigma',\delta',I')$ for $(r',t')$,  and then $(N,\sigma_0+p\sigma',\delta_0+p\delta',I)$ is the unique solution to (1),(2) (where $I=\{i+1\vert i\in I'\}$).  So $(r,t)$ is not a minimal counterexample.

\medskip

(ii) Suppose $r_0$ is odd.  Then $r\equiv -1+2\delta_0$ (\mod \, $p$) gives $\delta_0=(r_0+1)/2$.  From (2) we get 
$$r_0+pr'\equiv (p^N-1)+p\sigma'+(r_0+1)+2p\delta' \hskip 10pt   ({\rm mod} \, p^{N+1})$$ 
 i.e., $pr'\equiv p^N+p\sigma'+ 2p\delta'  \  ({\rm mod} \, p^{N+1})$, or
$$r'-1\equiv (p^{N-1}-1)+\sigma'+2\delta' \hskip 20pt ({\rm mod} \, p^N). \eqno{(5)}$$
\q Since $r_0$ is odd and $r$ and $t$ have the same parity, $r'$ and $t'$ have different parities. Hence $r'-1\geq t'$ and by minimality we have a unique solution  $(N-1,\sigma',\delta',I')$ giving  a solution for $(r'-1,t')$ and then (reversing the steps)  $(N,\sigma_0+p\sigma',\delta_0+p\delta',I)$ is the unique solution for $(r,t)$. Hence $(r,t)$ is not a minimal counterexample.

\medskip

(iii) Suppose $r_0$ is even, $r_0\neq p-1$.  Then $r\equiv -1+2\de_0$ (\mod \, $p$)  gives $\de_0=(p+1+r_0)/2$.
From (2) we get 
$$r_0+pr'\equiv (p^N -1) +p\sigma'+ (p+1+r_0)+2p\delta'  \  \ ({\rm mod}\, p^{N+1})$$  
 i.e.,  $pr'\equiv  p^N+p\sigma'+p+2p\delta' \ ({\rm mod}  \, p^{N+1})$, or
$$r'-2\equiv (p^{N-1}-1)+\sigma'+2\delta' \hskip 20pt ({\rm mod}  \, p^N). \eqno{(6)}$$

\q Note that $r'$ and $t'$ have the same parity and $r>t$ so that $r'-2\geq t'$. By minimality there is a unique admissible quadruple $(N-1,\sigma',\delta',I')$ satisfying (3) and (6) and hence $(N,\sigma_0+p\sigma',\delta_0+p\delta',I)$ is the unique admissible quadruple  satisfying (1) and (2). Hence $(r,t)$ is not a minimal counterexample.

\medskip

\q We assume in the remaining cases that $t_0\neq p-1$. From (1), in any solution $(N,\sigma,\delta,I)$ with $N>0$   we have $t_0\equiv -1+\sigma-2\sigma_I$ 
(\mod \, $p$), giving two possibilities for $\sigma_0$. Either $0\in I$ so that $\sigma_0=p-1-t_0$, or $0\not\in I$ and $\sigma_0=t_0+1$.

\bs\bs

Case B: \q $t_0\neq p-1$ and $r_0,t_0$  have the same parity (hence $r'$ and $t'$ have the same parity) and $r_0\geq t_0$. 

\medskip

\q Note that if $t'=0$ then, putting $u=(r-t)/2$,  we have $u_0=(r_0-t_0)/2$ and $t+u_0=(r_0+t_0)/2<p-1$ so that  $(r,t)$ is not a counterexample, by Lemma 1.9. 

\q So we now assume  $t'>0$, and suppose  that we have a solution $(N,\sigma,\delta,I)$. First suppose $0\in I$, $\sigma_0= p-1-t_0$. Then we have $r_0\equiv -1+p-1-t_0+2\de_0$ (mod  $p$) so that $\de_0=(r_0+t_0)/2+1$ but then $\sigma_0+\de_0=p-1+(r_0-t_0)/2$ and the admissibility condition is violated.

\q So we now assume $t'>0$, $0\not\in I$, $\sigma_0=t_0+1$.  Hence   $r_0\equiv t_0+2\de_0$ (mod  $p$) and we get $\de_0=(r_0-t_0)/2$. In this case the condition $\sigma_0+\de_0\leq p-1$ is satisfied.

\q Writing $\sigma=\sigma_0+p\sigma'$, $\delta=\delta_0+p\delta'$ and $I'=\{i-1\vert 0\neq i\in I\}$  as usual we need to consider solutions of  the equations
$$t=(p^N-1)+t_0+1+p\sigma'-2p\sigma_{I'}'$$
i.e.,
$$t'-1=(p^{N-1}-1)+\sigma'-2\sigma_{I'}'  \eqno{(7)}$$
and
$$r\equiv (p^N-1)+t_0+1+(r_0-t_0)+p\sigma'+2p\delta' \hskip 20pt ({\rm mod} \, p^{N+1})$$
i.e.,
$$r'-1\equiv (p^{N-1}-1)+\sigma'+2\delta' \hskip 20pt ({\rm mod} \, p^N) \eqno{(8)}.$$

\q But one can solve   (7) and (8) uniquely for $(N-1,\sigma',\delta',I')$ by the minimality assumption and then $(N,\sigma_0+p\sigma',\delta_0+p\delta',I)$ is the unique solution for $(r,t)$.

\bigskip  \bs

Case C:   \  $r_0$, $t_0\neq p-1$ have the same parity (hence $r'$ and $t'$  have the same parity) and $r_0<t_0$.

\q We consider a solution $(N,\sigma,\delta,I)$ and, as usual, we assume $N>0$.

\q Assume that $0\not\in I$, $\sigma_0=t_0+1$.  We have $r_0\equiv -1+(t_0+1)+2\de_0$ (mod  $p$), which gives $\de_0=(r_0-t_0)/2+p$. But now $\sigma_0+\de_0=(r_0+t_0)/2+1+p$ and the admissibility condition is violated.

\q Hence we may assume $0\in I$, $\sigma_0=p-1-t_0$.   Then we have $r_0\equiv -1+p-1-t_0+2\de_0$ (mod $p$), giving $\de_0=(r_0+t_0)/2+1$. In this case we have
$$\sigma_0+\de_0=p-1-t_0+(r_0+t_0)/2+1=p-(t_0-r_0)/2\leq p-1$$
and the desired admissibility condition is satisfied.

\q Hence $(N,\sigma,\de,I)$ is a solution for $(r,t)$ if and only if 
$$t_0+pt'=(p^N-1)+p\sigma'-(p-1-t_0)-2p\sigma'_{I'}$$
i.e., 
$$t'=(p^{N-1}-1)+\sigma'-2\sigma'_{I'} \eqno{(9)}$$
and
$$r_0+pr'
\equiv (p^N-1)+(p-1-t_0)+p\sigma'+r_0+t_0+2+2p\de' \hskip 10pt \hbox{(mod $p^{N+1}$)}$$
i.e.,
$$r'-2\equiv (p^{N-1}-1)+\sigma'+2\de'  \hskip 10pt \hbox{(mod $p^{N}$).} \eqno{(10)}$$

\q Now since $r\geq t$ and $r_0<t_0$ we have $r'>t'$ and therefore $r'-2\geq t'$, since $r'$ and $t'$ have the same parity.  We can solve (9) and (10) uniquely for $(N-1,\sigma',\de',I')$ and then $(N,\sigma,\de,I)$ is the unique admissible quadruple for $(r,t)$.

\bigskip

Case D: \    $r_0$ and $t_0$ (and so $r'$ and $t'$) have different parities  and  $r_0+t_0\leq p-4$.

\bs

\q If $t'=0$ then, putting $u=(r-t)/2$ we have $2u_0\equiv r_0-t_0 \ ({\rm mod}\, p)$  and hence $u_0=(p+r_0-t_0)/2$ from which it follows that $u_0+t<  p-1$ and there is a unique solution by Lemma 1.9.

\q We assume from now on that $t'>0$. We consider a possible solution   $(N,\sigma,\delta,I)$ with $N>0$.  First suppose $0\in I$, $\sigma_0=p-1-t_0$.  Then 
$$r_0\equiv -1 + p-1 -t_0+ 2\de_0 \ ({\rm mod}\, p)$$
 so that $2\de_0\equiv r_0+t_0+2 \ ({\rm mod}\, p)$ and, by parity considerations, $2\de_0=r_0+t_0+2+p$. However, we require $\sigma_0+\de_0\leq p-1$, i.e. $2\sigma_0+2\de_0\leq 2p-2$, i.e. 
 $$2p-2-2t_0+r_0+t_0+2+p\leq 2p-2$$
 i.e.,   $r_0+t_0+2+p\leq 2t_0$ i.e., $(r_0-t_0)+p+2\leq 0$ and this is false. 
 
 \q We now suppose $0\not\in I$, $\sigma_0=t_0+1$.    We get $r_0\equiv -1 + t_0+1+2\de_0 \ ({\rm mod}\, p)$ so that $\de_0=(r_0-t_0+p)/2$.  We require  $\sigma_0+\de_0\leq p-1$,  i.e., $t_0+1+(r_0-t_0+p)/2\leq p-1$, i.e., $(r_0+t_0+p)/2\leq p-2$, i.e., $r_0+t_0\leq p-4$, and indeed this is the case.

\q Thus $(N,\sigma,\de,I)$ is a solution for $(r,t)$ if and only if 
$$t=(p^N-1)+t_0+1 +p\sigma'-2p\sigma_{I'}$$
i.e.,
$$t'-1=(p^{N-1}-1)+\sigma'-2\sigma_{I'}' \eqno{(11)}$$
and 
$$r_0+pr'\equiv (p^N-1)+t_0+1+p\sigma'+(r_0-t_0+p)+2p\delta' \hskip 20pt ({\rm mod} \, p^{N+1})$$
i.e., 
$$r'-2\equiv (p^{N-1}-1)+\sigma'+2\de'  \hskip 20pt ({\rm mod}\, p^N). \eqno{(12)}$$

\q Now we have $r'\geq t'$, $t'>0$ and $r'$ and $t'$ have different parities, from which it follows that $r'-2\geq t'-1\geq 0$. Hence by minimality there is a unique solution $(N-1,\sigma',\de',I')$ for $(r'-2,t'-1)$, and then $(N,\sigma,\de,I)$ is the unique solution for $(r,t)$. Hence $(r, t)$ is not a minimal counterexample.

\bigskip\bs

Case E:  \  $r_0, t_0\neq p-1$, $r_0$ and $t_0$ have different parities (and hence so do $r'$ and $t'$), and $r_0+t_0\geq  p-2$.

\bs

\q As usual we consider possible admissible solutions   $(N,\sigma,\de,I)$ with $N>0$. 

\q Suppose first that $0\not\in I$, $\sigma_0=t_0+1$. Then we have 
$$r_0\equiv -1 + t_0+1+2\de_0 \  ({\rm mod}\, p).$$
This gives $\de_0=(r_0-t_0+p)/2$. But then  
$$\sigma_0+\de_0= (r_0+t_0+p)/2+1\geq (2p-2)/2+1= p$$
 and the condition $\sigma_0+\de_0\leq p-1$ does not hold. Hence there is no such solution.
 
 \q Now suppose that $0\in I, \sigma_0=p-1-t_0$.  Then  we have $r_0\equiv -1+{p-1-t_0}+2\de_0 \ ({\rm mod}\, p)$ which gives  $\de_0=(r_0+t_0+2-p)/2$.   Thus we have 
 \begin{align*}\sigma_0+\de_0&=p-1-t_0+(r_0+t_0+2-p)/2=(r_0-t_0+p)/2\cr
 &\leq (r_0+p)/2\leq (p-2+p)/2=p-1
 \end{align*}
 and the desired condition $\sigma_0+\de_0\leq p-1$ holds.

   \q Thus $(N,\sigma,\de,I)$ is a solution for $(r,t)$ if and only if 

$$t_0+pt'=p^N-1+p\sigma'-(p-1-t_0)-2p\sigma_{I'}'$$
 i.e.,
$$t'=(p^{N-1}-1)+\sigma'-2\sigma_{I'}' \eqno{(13)}$$
and 
$$r_0+pr'\equiv p^N-1 + (p-1-t_0)+p\sigma'+(r_0+t_0+2-p)+2p\delta' \hskip 20pt ({\rm mod} \, p^{N+1})$$
i.e.,
$$r'-1\equiv p^{N-1}-1 +\sigma'+2\delta' \hskip 20pt ({\rm mod} \, p^N) \eqno{(14)}.$$

\q Since $r'$ and $t'$ have different parities we have $r' > t'$, and so $r'-1 \geq t'$. Then by minimality there is a unique admissible quadruple  $(N-1,\sigma,\delta',I')$ satisfying (13) and (14) and then  $(N,\sigma,\de,I)$ is the unique solution for  $(r,t)$. 

\q We have examined all cases and shown that there is no minimal counterexample in each case so the result is proved. \qed

\begin{example} The question of whether the Lie algebra $\Delta(2)$ appears as a direct summand of $Y(2n)$ has a special significance for the theory of reductive pairs. We here recover the result of Goodbourn,  \cite[Theorem 4.8]{Oliver},  describing this situation.

\q If $p=2$ then since $\Delta(2)$ is not a tilting module, it cannot occur as a direct summand of the tilting module $Y(2n)$. 

\q  We take $r=2n$, $s=2$ in Theorem 1.7$\phantom{\,}^\prime$, so that $u=n-1$. 

\q Assume $p=3$.  Then $s$ has standard form $2=(3^1-1)+0$, so that $N=1$, $\sigma=0$.  Thus $T(2)$ is a summand of $Y(2n)$ when $(n-1)_1\neq 2$, i.e., $n\equiv 1,2,3,4,5$ or $6$ modulo $9$. 

\q Assume $p>3$. Then $s$ has standard form $2=(p^0-1)+2$, so that $N=0$, $\sigma=2$.  Thus $T(2)$ is a summand of $Y(2n)$ when $(n-1)_0+2<p-1$, i.e., $(n-1)_0\neq  p-3,p-2,p-1$, i.e., $n$ is not congruent to $0,-1$ or $-2$ modulo $p$.

\q To complete the picture we take $p=2$ and determine when $T(2)$ is a summand of $Y(2n)$. We have the standard form $2=(2^1-1)+1$, so that $N=1$, $\sigma=1$.  Thus $T(2)$ is a summand of $Y(2n)$ when $(n-1)_0=0$, $(n-1)_1=0$, i.e., when $n\equiv 1$ modulo $4$.

\end{example}

\section{A short exact sequence}

\q  In this section, we consider a general module of the form $\nabla(r)\otimes \Delta(s)$, with $r\geq s>0$. Our method is to understand this module as a quotient of some $Y(m)$. The key result in making the passage to the quotient  is the short exact sequence described in  Proposition 2.1  below.

\q  We shall work with the action of the divided power operators $f_i$, $i\geq 0$, on rational $G$-modules.    For $t\in K$, we have the unipotent element 
$u(\xi)=\left( \begin{matrix} 1& 0\cr
\xi& 1 
\end{matrix}
\right)$ of $G$.   Let $U$ be the unipotent subgroup $\{u(\xi) \vert \xi\in K\}$ of $G$.  Suppose that $V$ is a rational $U$-module and that $v\in V$.  Then there  is a uniquely determined sequence of elements $v_0,v_1,\ldots$ such that only finitely many are non-zero and $u(\xi)v=\sum_{i=0}^\infty \xi^i v_i$, for all $\xi\in K$.  The divided powers operators $f_0, f_1,\ldots$ are elements of the algebra of distributions of $U$ satisfying $f_iv=v_i$, for $i\geq 0$. The action of the divided powers operators on a tensor product  $V\otimes V'$ of rational $U$-modules is given by
$$f_a(x\otimes y)=\sum_{a=b+c} f_bx\otimes f_cy$$
for $a\geq 0$ and $x\in V$, $y\in V'$.

\q   If $l_+$ is a non-zero highest weight vector of the Weyl module $\Delta(s)$, $s\geq 0$, then the elements $l_+=f_0l_+,  f_1 l_+, \ldots, f_s l_+$ form a $K$-basis of $\Delta(s)$ and $f_il_+=0$, for $i>s$.

\q Over a field of characteristic $0$, it follows from the usual Clebsch-Gordan formula that, for $r\geq s>0$, the module $\nabla(r)\otimes \Delta(s)$ is the direct sum of the modules $\nabla(r-s)$ and $\nabla(r+1)\otimes \Delta(s-1)$. However, the following weak version survives in arbitrary characteristic.

\begin{proposition} Suppose that $r\geq  s >0$. Then there exists a short exact sequence of $G$-modules
$$0\to \nabla(r-s)\to \nabla(r)\otimes \Delta(s)\to \nabla(r+1)\otimes \Delta(s-1)\to 0.$$
\end{proposition}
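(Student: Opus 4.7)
The plan is to construct explicit $G$-equivariant maps $\phi$ and $\psi$ realising the sequence, verify $\phi$ is injective and $\psi \circ \phi = 0$ by direct computation, and deduce exactness from the character identity $\chi(r)\chi(s) = \chi(r-s) + \chi(r+1)\chi(s-1)$ combined with an inductive argument for surjectivity of $\psi$.

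Construct $\phi: \nabla(r-s) \to \nabla(r) \otimes \Delta(s)$ via the natural isomorphism $\nabla(r) \otimes \Delta(s) \cong \Hom_K(\nabla(s), \nabla(r))$: for $p \in \nabla(r-s) = S^{r-s}E$, let $\phi(p)$ be left multiplication by $p$ in the symmetric algebra $S^*E$. This is $G$-equivariant and injective because $S^*E$ is an integral domain (if $\phi(p) = 0$ then in particular $p \cdot e^s = 0$, forcing $p=0$). Construct $\psi: \nabla(r) \otimes \Delta(s) \to \nabla(r+1) \otimes \Delta(s-1)$ as the composite
\begin{equation*}
\nabla(r) \otimes \Delta(s) \xrightarrow{1 \otimes i_s} \nabla(r) \otimes E \otimes \Delta(s-1) \xrightarrow{m_r \otimes 1} \nabla(r+1) \otimes \Delta(s-1),
\end{equation*}
where $i_s: \Delta(s) \to E \otimes \Delta(s-1)$ is dual to the multiplication $\nabla(s-1) \otimes E \to \nabla(s)$ (using $E \cong E^*$) and $m_r: \nabla(r) \otimes E \to \nabla(r+1)$ is multiplication in $S^*E$. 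Both factors are $G$-equivariant, so $\psi$ is. Explicitly, on the divided-power basis one finds $\psi(q \otimes f_i l_+^{(s)}) = (q \cdot f) \otimes f_{i-1} l_+^{(s-1)} + (q \cdot e) \otimes f_i l_+^{(s-1)}$ (with the conventions $f_{-1} l_+^{(s-1)} = 0 = f_s l_+^{(s-1)}$, and $l_+^{(t)}$ the highest weight vector of $\Delta(t)$). Verify $\psi \circ \phi = 0$ by direct computation on the basis $\{e^{r-s-k}f^k\}$ of $\nabla(r-s)$: the expansion telescopes due to commutativity of $S^*E$.

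It remains to show $\psi$ is surjective; combined with the character identity, this yields exactness. The cleanest approach is induction on $s$, with base case $s=1$ being the standard Clebsch-Gordan sequence $0 \to \nabla(r-1) \to \nabla(r) \otimes E \to \nabla(r+1) \to 0$. For the inductive step, embed $A := \nabla(r) \otimes \Delta(s)$ and $B := \nabla(r-1) \otimes \Delta(s-1)$ as submodules of $X := \nabla(r) \otimes E \otimes \Delta(s-1)$ via the Clebsch-Gordan inclusions $1 \otimes i_s: A \hookrightarrow X$ and $j_r \otimes 1: B \hookrightarrow X$; then $\psi$ is the composite $A \hookrightarrow X \twoheadrightarrow X/B$, and a snake-lemma diagram chase identifies $\Coker \psi$ with $\Coker \varphi$, where $\varphi: B \to X/A = \nabla(r) \otimes \Delta(s-2)$ is the analogous composite. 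The main obstacle is to verify the identification $\varphi = -\psi_{r-1, s-1}$ (with $\psi_{r-1, s-1}$ the inductive map for $(r-1, s-1)$); this follows from direct computation on basis vectors using the explicit formulas for $j_r$, $i_s$, and the projection $p_s: E \otimes \Delta(s-1) \to \Delta(s-2)$. The inductive hypothesis then delivers surjectivity of $\varphi$, and hence of $\psi$, completing the induction.
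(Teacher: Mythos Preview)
Your map $\psi$ is exactly the paper's map $\theta=(\alpha\otimes\id)\circ(\id\otimes\beta)$, so the constructions agree. The proofs diverge in how surjectivity and the kernel are handled. For surjectivity the paper gives a direct one--step argument: from the explicit formula one sees that all $x_1^{a+1}x_2^b\otimes m_+$ and all $x_1^ax_2^{b+1}\otimes f_{s-1}m_+$ lie in the image, and a single telescoping sum $\sum_{i=1}^{s-1}(-1)^{i-1}x_1^{i-1}x_2^{r-i+1}\otimes f_i l_+$ then produces the missing vector $x_2^{r+1}\otimes m_+$; since $m_+$ generates $\Delta(s-1)$, this finishes. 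Your inductive route, identifying $\Coker\psi\cong X/(A+B)\cong\Coker\varphi$ and checking that $\varphi$ agrees up to a nonzero scalar with $\psi_{r-1,s-1}$, is correct and more structural, but the scalar identification is itself a basis computation of comparable length, so neither approach is clearly shorter. For the kernel the paper does not build $\phi$ at all: it simply notes that the bottom good--filtration section $V_1\cong\nabla(r-s)$ maps to zero because no section $\nabla(r-s+2j)$ of the target has $L(r-s)$ in its socle, and then compares dimensions. Your explicit $\phi$ via multiplication in $S^*E$ also works; the telescoping in $\psi\circ\phi=0$ relies on the alternating signs $(-1)^i$ that appear when one writes the dual of the monomial basis of $\nabla(s)$ in terms of the $f_i l_+$, a point worth making explicit.
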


\begin{proof}   We write $x_1,x_2$ for the natural basis vectors of $E$, with $x_1$ having weight $1$ and $x_2$ having weight $-1$.  Multiplication in the symmetric algebra on $E$ gives the $G$-module homomorphism $\alpha: S^r E\otimes E\to S^{r+1}E$,  taking $x_1^ax_2^b\otimes y$ to $x_1^ax_2^by$, for $a,b\geq 0$, $a+b=r$, $y\in E$.

\q  We choose nonzero  highest weight vectors  $l_+$ in $\Delta(s)$ and  
$m_+$ in \\
$\Delta(s-1)$.  The dual of the  natural surjection $E\otimes S^{s-1}E\to S^sE$ is a $G$-module   embedding $\Delta(s) \to E\otimes \Delta(s-1)$, and it follows, by weight considerations,  that there is an embedding  $\beta: \Delta(s) \to E\otimes \Delta(s-1)$ taking $l_+$ to $x_1\otimes m_+$.

\q Note that, for $i>0$ we have 
$$\beta(f_il_+)=\sum_{i=u+v} f_ux_1\otimes f_v  m_+=x_1\otimes f_im_+ + x_2\otimes f_{i-1} m_+.$$

\q We define
$$\theta=(\alpha\otimes \id)\circ (\id\otimes \beta):\nabla(r) \otimes \Delta(s)\to \nabla(r+1)\otimes \Delta(s-1)$$
where the first $\id$ is the identity map on  $\Delta(s-1)$  and the second is the identity map on $\nabla(r)$.

\q We will show that $\theta$ is surjective, and that its kernel is isomorphic to $\nabla(r-s)$.  Note that, for $a,b\geq 0$ with $a+b=r$, we have 
$$\theta(x_1^ax_2^b\otimes l_+)=x_1^{a+1}x_2^b \otimes m_+ \eqno{(1)}$$
and, for $i\geq 1$, we have 
$$\theta(x_1^ax_2^b\otimes f_il_+)=x_1^{a+1}x_2^b\otimes f_im_+ + x_1^ax_2^{b+1}\otimes f_{i-1}m_+.\eqno{(2)}$$
 In particular,  we obtain
$$\theta(x_1^ax_2^b\otimes f_sl_+)=x_1^ax_2^{b+1}\otimes f_{s-1}m_+\eqno{(3)}$$
(since $m_+$ is a highest weight vector of $\Delta(s-1)$ and $f_sm_+=0$).

\q Hence , by (1),  the image ${\rm Im}(\theta)$ contains all elements $x_1^{a+1}x_2^b\otimes m_+$.   Furthermore, we have
\begin{align*}\theta(\sum_{i=1}^{s-1}&(-1)^{i-1}x_1^{i-1}x_2^{r-i+1}\otimes f_im_+)\cr
&=\sum_{i=1}^{s-1} (-1)^{i-1}  (x_1^i x_2^{r-i+1}\otimes f_im_+ + x_1^{i-1}x_2^{r-i+2}\otimes f_{i-1}m_+)
\end{align*}
by (2).
Note that this sum is telescopic and reduces to 
$$x_2^{r+1}\otimes m_+ +   (-1)^{s-2} x_1^{s-1}x_2^{r-s+2}\otimes f_{s-1} m_+.$$
But, from (3) we have that $x_1^{s-1}x_2^{r-s+2}\otimes f_{s-1} m_+\in {\rm Im}(
\theta)$ and therefore $x_2^{r+1}\otimes m_+ \in {\rm Im}(\theta)$. 

\q We have shown that $x_1^ax_2^b\otimes m_+\in {\rm Im}(\theta)$ for all $a,b\geq 0$ with $a+b=r+1$. Hence $\nabla(r+1)\otimes m_+\subseteq {\rm Im}(\theta)$. But now 
$$Z=\{ v\in \Delta(s-1) \vert \nabla(r+1)\otimes  v \subseteq {\rm Im}(\theta) \}$$
 is a $G$-submodule of $\Delta(s-1)$ containing the generator $m_+$ and hence 
 $Z=\Delta(s-1)$, i.e., ${\rm Im}(\theta)=\nabla(r+1)\otimes \Delta(s-1)$.
 
 \q Now $V=\nabla(r)\otimes \Delta(s)$ has a good filtration $0=V_0<  V_1 <  \cdots < V_{s+1}=V$ with $V_i/V_{i-1}\cong \nabla(r-s+2(i-1))$, for $1\leq i\leq s+1$. In particular $V_1$ is isomorphic to $\nabla(r-s)$. Also, the module $W=\nabla(r+1)\otimes \Delta(s-1)$  has a good filtration $0=W_0<W_1\cdots<W_s$ with $W_i/W_{i-1}\cong \nabla(r-s+2j)$, for 
$1\leq j\leq s$. But now, for $1\leq j\leq s$ then map $V_1\to W_j/W_{j-1}$ induced by $\theta$ is $0$ since the socle $L(r-s+2j)$ of $W_j/W_{j-1}$ is not a composition factor of $V_1$. Hence $\theta(V_1)=0$, i.e., $V_1$ is contained in  ${\rm Ker}(\theta)$. Now $\theta$ induces a surjective module homomorphism ${\overline \theta}: V/V_1\to W$ and since $V/V_1$ and $W$ have the same character and hence the same dimension, ${\overline \theta}:(\nabla(r)\otimes \Delta(s))/V_1\to  \nabla(r+1)\otimes \Delta(s-1)$ is an isomorphism, and we are done.
\end{proof}

\q We recall the following construction (for further details see, for example, \cite[Appendix]{q-Donk}).  Let $\pi$ be a set of non-negative integers. We say that a rational $G$-module $V$ belongs to $\pi$ if each composition factor $L$ of $V$ has the form $L(t)$ for some $t$ (depending on $L$) in $\pi$. Among all submodules of an arbitrary rational $G$-module $V$ there is a unique maximal one belonging to $\pi$, which we denote $O_\pi(V)$.   A set $\pi$ of non-negative integers will be called {\em saturated} if whenever $n\in \pi$ and $m\leq n$ with $n-m$ even then $m\in \pi$.  We have, e.g.  by  \cite[Proposition A.3.2(i)]{q-Donk}, the following result.

\begin{lemma}   If $\pi$ is a saturated set of non-negative integers  and $0\to V'\to V\to V''\to 0$ is a short exact sequence of $G$-modules with a good filtration then 
$$0\to O_\pi(V')\to O_\pi(V)\to O_\pi(V'')\to 0$$
 is exact.
 \end{lemma}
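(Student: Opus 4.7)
The plan is to establish a structural description of $O_\pi(V)$ when $V$ has a good filtration, from which the desired exact sequence will follow by a dimension count. The structural claim to prove is that $O_\pi(V)$ has a good filtration whose sections are exactly those $\nabla(r_i)$ in a good filtration of $V$ with $r_i\in\pi$, while $V/O_\pi(V)$ has a good filtration with the complementary sections.

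I would prove this structural claim by induction on the length $n$ of the good filtration. The base case $V=\nabla(r)$ follows from saturation: the composition factors of $\nabla(r)$ are the $L(t)$ with $t\le r$ and $t\equiv r\pmod 2$, so if $r\in \pi$ every composition factor of $\nabla(r)$ lies in $\pi$ and $O_\pi(\nabla(r))=\nabla(r)$, whereas if $r\notin\pi$ the simple socle $L(r)$ of $\nabla(r)$ is not in $\pi$, ruling out any nonzero submodule belonging to $\pi$. For the inductive step, the aim is to rearrange the good filtration so that all in-$\pi$ sections appear at the bottom. The key tool is the Ext-vanishing $\Ext^1_G(\nabla(b),\nabla(a))=0$ whenever $a\ge b$, which permits the swap of an adjacent pair with below-section $\nabla(a)$ and above-section $\nabla(b)$ in that case. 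In the remaining case $a<b$ with $a\notin\pi$ and $b\in\pi$, saturation forces $b-a$ to be odd (else $a$ would be forced to lie in $\pi$); then $\nabla(a)$ and $\nabla(b)$ sit in different linkage blocks, all Ext groups between them vanish, and the swap is again available.

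With the structural result in place, one obtains $\dim_K O_\pi(V)=\sum_{r\in\pi}(V:\nabla(r))\dim_K\nabla(r)$. Since good filtration multiplicities are additive on short exact sequences of modules with good filtrations, this yields $\dim O_\pi(V)=\dim O_\pi(V')+\dim O_\pi(V'')$. Left-exactness of $O_\pi$ is a formal consequence of its definition: $O_\pi(V)\cap V'$ is a submodule of $V'$ belonging to $\pi$, hence equals $O_\pi(V')$, and the image of $O_\pi(V)$ in $V''$ belongs to $\pi$, hence lies in $O_\pi(V'')$. Combining the exactness of $0\to O_\pi(V')\to O_\pi(V)\to O_\pi(V'')$ with the dimension identity forces the right-hand map to be surjective, yielding the claimed short exact sequence. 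The main obstacle is the rearrangement step inside the structural claim, where the two subcases must be handled in tandem using both Ext-vanishing and the parity consequence of saturation; once these are checked, the remainder is routine bookkeeping with good filtrations.
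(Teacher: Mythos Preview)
Your proof is correct. The paper does not actually prove this lemma but imports it from \cite[Proposition A.3.2(i)]{q-Donk}, so you are supplying a self-contained argument where the paper merely cites. The overall strategy---rearranging a good filtration so that the in-$\pi$ sections sit at the bottom, then combining the resulting dimension formula with formal left-exactness of $O_\pi$---is exactly the standard one underlying the cited result.

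One small imprecision: you write that the composition factors of $\nabla(r)$ \emph{are} the $L(t)$ with $t\le r$ and $t\equiv r\pmod 2$; in fact they are only \emph{among} these (e.g.\ $\nabla(2)=L(2)$ is simple when $p>3$). Your argument, however, only uses the inclusion (for $r\in\pi$) together with the fact that the socle of $\nabla(r)$ is $L(r)$ (for $r\notin\pi$), both of which hold, so nothing is damaged.

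The one step you leave implicit is the identification of the rearranged bottom piece $W$ with $O_\pi(V)$. That $W\subseteq O_\pi(V)$ is clear since $W$ belongs to $\pi$; for the reverse inclusion one needs $O_\pi(V/W)=0$, which follows by a short induction on the good-filtration length of $V/W$ using the base case $O_\pi(\nabla(r))=0$ for $r\notin\pi$. This is routine, and once it is in place your dimension count and left-exactness argument finish the proof exactly as you describe.
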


 Also we have, by e.g. by  \cite[Proposition A.3.1(ii) and Proposition A. 3.2]{q-Donk}, the following result.

 \begin{lemma} If $\pi$ is a saturated set of non-negative integers and $V$ is a finite dimensional $G$-module with a good filtration then:

 (i) $$(O_\pi(V):\nabla(r))= \begin{cases} (V:\nabla(r)),  &\hbox{ if  } r\in \pi; \cr
 0, & \hbox{ otherwise}
 \end{cases}$$

 (ii) $V/O_\pi(V)$ has a good filtration and 
  $$(V/O_\pi(V):\nabla(r))= \begin{cases}  0,   &\hbox{ if  } r\in \pi; \cr
 (V:\nabla(r)), & \hbox{ otherwise.}
 \end{cases}$$

 \end{lemma}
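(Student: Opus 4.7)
The plan is to induct on the length $n$ of a good filtration $0 = V_0 \subset V_1 \subset \cdots \subset V_n = V$ with $V_i/V_{i-1} \cong \nabla(r_i)$, using Lemma~2.3 as the engine for the inductive step. The base case $V \cong \nabla(s)$ rests on identifying $O_\pi(\nabla(s))$ directly from the structure of $\nabla(s)$. The composition factors of $\nabla(s)$ are of the form $L(t)$ with $0 \leq t \leq s$ and $s-t$ even. If $s \in \pi$, then by saturation every such $t$ lies in $\pi$, so $\nabla(s)$ itself belongs to $\pi$ and $O_\pi(\nabla(s)) = \nabla(s)$. If $s \notin \pi$, then since $\soc \nabla(s) = L(s)$, every nonzero submodule of $\nabla(s)$ contains $L(s)$, which does not belong to $\pi$; hence $O_\pi(\nabla(s)) = 0$. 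Both (i) and (ii) follow immediately in each subcase.

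For the inductive step, I would apply Lemma~2.3 to the short exact sequence $0 \to V' \to V \to \nabla(r) \to 0$, with $V' = V_{n-1}$ and $r = r_n$, obtaining
$$0 \to O_\pi(V') \to O_\pi(V) \to O_\pi(\nabla(r)) \to 0.$$
If $r \in \pi$, the base case gives $O_\pi(\nabla(r)) = \nabla(r)$; combining this with the inductive hypothesis for $V'$ yields (i), and a $3 \times 3$ diagram chase identifies $V/O_\pi(V)$ with $V'/O_\pi(V')$, which has the required good filtration by induction (and the multiplicities match because $r \in \pi$ contributes only to (i)). If $r \notin \pi$, the base case gives $O_\pi(\nabla(r)) = 0$, hence $O_\pi(V) = O_\pi(V')$; this settles (i), and the exact sequence
$$0 \to V'/O_\pi(V') \to V/O_\pi(V) \to \nabla(r) \to 0$$
extends the good filtration of $V'/O_\pi(V')$ on top by a section $\nabla(r)$, producing a good filtration of $V/O_\pi(V)$ with the multiplicities predicted by (ii).

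The only delicate point is the base case: it uses the saturation hypothesis in an essential way (to ensure that $\nabla(s)$ itself belongs to $\pi$ whenever $s \in \pi$) together with the fact that $\nabla(s)$ has simple socle $L(s)$ (to force $O_\pi(\nabla(s)) = 0$ when $s \notin \pi$). Everything past the base case is bookkeeping, since Lemma~2.3 already delivers the exactness of $O_\pi$ on short exact sequences of modules with good filtration; one simply checks that the multiplicity formulae and good-filtration extensions propagate through each of the two cases of the induction.
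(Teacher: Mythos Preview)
Your argument is correct. The paper itself does not prove this lemma but simply cites \cite[Proposition~A.3.1(ii) and Proposition~A.3.2]{q-Donk}, so your induction on filtration length is a genuinely more self-contained route. The base case is exactly right: saturation guarantees that $s\in\pi$ forces every composition factor of $\nabla(s)$ (whose highest weights are at most $s$ and congruent to $s$ modulo $2$) to lie in $\pi$, while the simple socle $L(s)$ forces $O_\pi(\nabla(s))=0$ when $s\notin\pi$. The inductive step is routine once exactness of $O_\pi$ on short exact sequences with good filtrations is available.

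One small correction of labelling: the exactness statement you invoke is Lemma~2.2 in the paper, not Lemma~2.3; the latter is precisely the result you are proving. Apart from this slip in the cross-reference, nothing needs to change.
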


 \q It follows from Proposition 2.1 that, for  $r>s\geq 0$ the module $\nabla(r)\otimes \Delta(s)$ contains a unique submodule $M$, say, isomorphic to $\nabla(r-s)$ and $\nabla(r)\otimes \Delta(s)/M$ has a filtration with sections  $\nabla(t)$, with $r-s<t\leq r+s$, and $r+s-t$ even. We get the following consequences of Proposition 2.1.

 \begin{corollary}Let $r\geq s\geq0$. Then we have 
$$\nabla(r)\otimes \Delta(s)=Y(r+s)/O_\pi(Y(r+s))$$
 where $\pi=\{i\in \eno \vert i<r-s\}$.
\end{corollary}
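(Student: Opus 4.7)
The plan is to construct an explicit surjection $\phi\colon Y(r+s)\twoheadrightarrow \nabla(r)\otimes \Delta(s)$ by iterating Proposition~2.1, and then identify its kernel with $O_\pi(Y(r+s))$ by a character count via Lemma~2.3.

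First I would dispose of the case $r=s$, which is immediate since then $\pi=\emptyset$ and $Y(2s)=\nabla(s)\otimes \Delta(s)$ by the very definition of $Y$. Assume now $r>s\geq 0$. Writing $r+s=2m$ or $r+s=2m+1$, we have $Y(r+s)=\nabla(m)\otimes\Delta(m)$ or $\nabla(m+1)\otimes\Delta(m)$ respectively. Applying Proposition~2.1 repeatedly gives a chain of surjections
$$\nabla(m)\otimes\Delta(m)\twoheadrightarrow \nabla(m+1)\otimes\Delta(m-1)\twoheadrightarrow\cdots\twoheadrightarrow \nabla(r)\otimes\Delta(s)$$
in the even case (and analogously in the odd case), where the kernel of the $j$th arrow is $\nabla(2j)$ (respectively $\nabla(2j+1)$). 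Composing these surjections yields $\phi$, and an easy induction shows $K:=\ker\phi$ has a good filtration whose sections are precisely $\nabla(t)$ for $t<r-s$ with $t\equiv r+s\ (\mathrm{mod}\ 2)$.

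Next I would show $K\subseteq O_\pi(Y(r+s))$. The set $\pi=\{i\in\eno\mid i<r-s\}$ is clearly saturated. Every section $\nabla(t)$ of the good filtration of $K$ has $t<r-s$, and since composition factors of $\nabla(t)$ are $L(t')$ with $t'\leq t$, all composition factors of $K$ have label in $\pi$. Thus $K$ belongs to $\pi$, so by maximality of $O_\pi(Y(r+s))$ we get the stated inclusion.

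Finally, I would close the argument by comparing characters. By the formula $(*)$, the good filtration multiplicities of $Y(r+s)$ are $(Y(r+s):\nabla(t))=1$ exactly when $0\leq t\leq r+s$ and $t\equiv r+s\ (\mathrm{mod}\ 2)$. By Lemma~2.3(i), $O_\pi(Y(r+s))$ has the same multiplicities for $t\in\pi$ and zero otherwise; this matches the sections of $K$ listed above exactly, so $\dim K=\dim O_\pi(Y(r+s))$ and the inclusion is an equality. The main obstacle is the iterated construction of $\phi$: one must check that successive applications of Proposition~2.1 splice together to produce a surjection with kernel having the predicted good filtration, but this is routine once one tracks the sections at each stage.
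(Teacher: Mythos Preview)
Your proposal is correct and follows essentially the same approach implicit in the paper: the corollary is stated there without proof, but the surrounding discussion (Proposition~2.1 iterated, together with Lemmas~2.2 and~2.3) makes clear that the intended argument is exactly the one you give---compose the surjections of Proposition~2.1 to obtain $\phi$, observe that $\ker\phi$ belongs to $\pi$, and then match dimensions via Lemma~2.3(i). Your indexing of the kernels as $\nabla(2j)$ (resp.\ $\nabla(2j+1)$) is slightly off unless you start at $j=0$, but the substance is right: the successive kernels are $\nabla(0),\nabla(2),\ldots,\nabla(r-s-2)$ in the even case and $\nabla(1),\nabla(3),\ldots,\nabla(r-s-2)$ in the odd case.
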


\begin{corollary} Suppose that $r\geq s\geq 0$ and that $\pi$ is a saturated set of non-negative integers. Then  $(\nabla(r)\otimes \Delta(s))/O_\pi(\nabla(r)\otimes \Delta(s))$ is either zero or isomorphic to $\nabla(r')\otimes \Delta(s')$, for some $r'\geq s'\geq 0$.
\end{corollary}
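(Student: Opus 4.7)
The plan is to work inside the tilting module $V = Y(r+s)$ and apply Corollary 2.4 twice: once to realise $\nabla(r)\otimes\Delta(s)$ as a quotient $V/A$ with $A = O_{\pi_0}(V)$ for $\pi_0 = \{i \in \eno \vert i < r-s\}$, and again to recognise the final quotient as $\nabla(r')\otimes\Delta(s')$ for a suitable pair $(r',s')$ with $r'+s'=r+s$.

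By the submodule correspondence for $V \to V/A$, we have $O_\pi(V/A) = B/A$ for a unique submodule $B$ of $V$ containing $A$, and the module in question is $V/B$. The good-filtration sections of $V/A = \nabla(r)\otimes\Delta(s)$ are $\nabla(t)$ for $t$ running through $\Sigma = \{r-s, r-s+2, \ldots, r+s\}$, each with multiplicity one (Remark 1.2, or iteration of Proposition 2.1). Since all elements of $\Sigma$ share the parity of $r+s$ and $\pi$ is saturated, $\Sigma \cap \pi$ is an initial segment of $\Sigma$: either empty, all of $\Sigma$, or $\{r-s, r-s+2, \ldots, t_0 - 2\}$, where $t_0$ is the smallest element of $\Sigma\setminus\pi$. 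By Lemma 2.3 applied to $V/A$, the $\nabla$-sections of $B/A = O_\pi(V/A)$ are precisely those indexed by $\Sigma \cap \pi$.

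If $\Sigma \subseteq \pi$, then $V/B = 0$ and there is nothing to prove. Otherwise set $r' = (r+s+t_0)/2$ and $s' = (r+s-t_0)/2$, so that $r' \geq s' \geq 0$, $r'+s' = r+s$, and $r'-s' = t_0$. I claim $B = O_{\pi'}(V)$ with $\pi' = \{i \in \eno \vert i < t_0\}$. For the inclusion $B \subseteq O_{\pi'}(V)$: composition factors of $A$ lie in $\pi_0 \subseteq \pi'$, while composition factors of $B/A$ come from sections $\nabla(t)$ with $t \leq t_0 - 2$, so are $L(u)$ with $u < t_0$; thus $B$ belongs to $\pi'$. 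For equality, Lemma 2.3(i) shows that $O_{\pi'}(V)$ has $\nabla$-multiplicities $(O_{\pi'}(V):\nabla(u)) = 1$ exactly for $u$ of the same parity as $r+s$ with $0 \leq u < t_0$; combining $(A:\nabla(u))$ and $(B/A:\nabla(u))$ yields the same multiplicities for $B$, so $B$ and $O_{\pi'}(V)$ have equal dimensions, and the inclusion forces equality. Finally, Corollary 2.4 applied to the pair $(r',s')$ gives $V/B = Y(r'+s')/O_{\pi'}(Y(r'+s')) \cong \nabla(r')\otimes\Delta(s')$, completing the proof.

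The main obstacle I anticipate is the identification of $B$ with $O_{\pi'}(V)$; the saturation hypothesis on $\pi$ is what forces $\Sigma \cap \pi$ to be a single initial segment of $\Sigma$, so that the surviving sections in the quotient form the contiguous string of $\nabla$-sections that is characteristic of a module $\nabla(r')\otimes\Delta(s')$. Once that single segment is isolated, the rest is routine bookkeeping with good filtrations using Lemmas 2.2, 2.3 and Corollary 2.4.
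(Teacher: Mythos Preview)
Your argument is correct, but it takes a different route from the paper. The paper proceeds by induction on $s$: in the base case $s=0$ the quotient is either $0$ or $\nabla(r)$; for $s>0$, if $r-s\notin\pi$ then $O_\pi(\nabla(r)\otimes\Delta(s))=0$ by Lemma~2.3, while if $r-s\in\pi$ one uses the short exact sequence of Proposition~2.1 together with Lemma~2.2 to reduce to the pair $(r+1,s-1)$ and invoke the inductive hypothesis. Your approach instead works globally inside $Y(r+s)$: you realise $\nabla(r)\otimes\Delta(s)$ as $Y(r+s)/O_{\pi_0}(Y(r+s))$ via Corollary~2.4, identify the further quotient by $O_\pi$ as $Y(r+s)/O_{\pi'}(Y(r+s))$ for the single saturated set $\pi'=\{i<t_0\}$, and then apply Corollary~2.4 a second time. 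What you gain is an explicit one-shot description of $(r',s')$ in terms of the least surviving weight $t_0$, avoiding induction altogether; what the paper's proof gains is brevity and independence from Corollary~2.4 (though of course that result is already in hand). Both arguments ultimately rest on the same structural fact, namely that saturation forces the $\nabla$-sections killed by $O_\pi$ to form an initial segment.
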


\begin{proof}    We proceed by induction on  $s$.  If $s=0$ then 
$$O_\pi(\nabla(r)\otimes \Delta(s))=O_\pi(\nabla(r))$$
 which is either $0$ or $\nabla(r)$ so that $(\nabla(r)\otimes \Delta(s))/O_\pi(\nabla(r)\otimes \Delta(s))$ is either $\nabla(r)$ or $0$. 

\q We now assume that $s>0$ and that the result holds for smaller values.

If $r-s\not \in \pi$ then $O_\pi(\nabla(r)\otimes \Delta(s))=0$ and  again the result is clear. Assume now that $r-s\in \pi$. We identify $\nabla(r-s)$ with a submodule of $\nabla(r)\otimes \Delta(s)$. Thus we have 
\begin{align*}
(\nabla(r)&\otimes \Delta(s))/O_\pi(\nabla(r)\otimes \Delta(s))\cr
&\cong (\nabla(r)\otimes \Delta(s)/\nabla(r-s))/(O_\pi(\nabla(r)\otimes \Delta(s))/\nabla(r-s))\cr
&\cong (\nabla(r+1)\otimes \Delta(s-1))/O_\pi( \nabla(r+1)\otimes \Delta(s-1))
\end{align*}
and the result follows by induction. 
\end{proof}

\begin{corollary} If $C$ is a Clebsch-Gordan module then so is $C/O_\pi(C)$, for any saturated set of non-negative integers. 
\end{corollary}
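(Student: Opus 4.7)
The plan is to reduce to the case where $C$ is indecomposable and then exploit Corollary 2.5 via a splitting of the ambient tensor product.

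The key preliminary observation is that the functor $O_\pi$ commutes with finite direct sums. Concretely, if $V=V_1\oplus V_2$ then $O_\pi(V)=O_\pi(V_1)\oplus O_\pi(V_2)$. The inclusion $\supseteq$ is immediate because the right-hand side is a submodule of $V$ all of whose composition factors are in $\pi$. For the reverse inclusion, let $W=O_\pi(V)$ and let $p_i:V\to V_i$ be the projections; then $p_i(W)$ is a quotient of $W$, so it belongs to $\pi$, hence $p_i(W)\subseteq O_\pi(V_i)$, which forces $W\subseteq O_\pi(V_1)\oplus O_\pi(V_2)$.

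Using this, decompose $C=\bigoplus_j C_j$ into indecomposable summands (each of which is, by definition, an indecomposable Clebsch-Gordan module). Then
$$C/O_\pi(C)\;\cong\;\bigoplus_j C_j/O_\pi(C_j),$$
so it suffices to prove the corollary when $C$ is itself an indecomposable Clebsch-Gordan module.

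Now suppose $C$ is indecomposable and appears as a summand of some $\nabla(r)\otimes\Delta(s)$ with $r\geq s\geq 0$, say $\nabla(r)\otimes\Delta(s)=C\oplus D$. Applying the direct-sum formula to this decomposition yields
$$(\nabla(r)\otimes\Delta(s))/O_\pi(\nabla(r)\otimes\Delta(s))\;\cong\; C/O_\pi(C)\;\oplus\; D/O_\pi(D).$$
By Corollary 2.5, the left-hand side is either $0$ or isomorphic to $\nabla(r')\otimes\Delta(s')$ for some $r'\geq s'\geq 0$. In either case, every indecomposable summand of $C/O_\pi(C)$ is an indecomposable summand of a module of the form $\nabla(r')\otimes\Delta(s')$, and hence is an indecomposable Clebsch-Gordan module. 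Therefore $C/O_\pi(C)$ is a Clebsch-Gordan module, as claimed.

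The only step that requires any real care is the direct-sum compatibility of $O_\pi$; everything else is a straightforward assembly of Corollary 2.5 and the definition of a Clebsch-Gordan module.
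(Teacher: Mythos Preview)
Your proof is correct and follows essentially the same route as the paper's own argument: reduce to $C$ indecomposable, embed $C$ as a summand of some $\nabla(r)\otimes\Delta(s)$, and then use Corollary~2.5 together with the fact that $O_\pi$ respects direct sums to conclude that $C/O_\pi(C)$ is a summand of some $\nabla(r')\otimes\Delta(s')$. The only difference is that you spell out the direct-sum compatibility of $O_\pi$ explicitly, whereas the paper leaves this implicit.
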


\begin{proof}  We may assume that $C$ is indecomposable, and hence a direct summand of $\nabla(r)\otimes \Delta(s)$, for some $r\geq s$. Hence  $C/O_\pi(C)$ is a direct summand of 
$\nabla(r)\otimes \Delta(s)/O_\pi(\nabla(r)\otimes \Delta(s))$ and, by Corollary 1.6, this is $\nabla(r')\otimes \Delta(s')$, for some $r'\geq s'$, and hence $C/O_\pi(C)$ is Clebsch-Gordan.
\end{proof}

\section{The general result}

\q We  identify the indecomposable  Clebsch-Gordan modules, and give the promised decomposition of a general module of the form $\nabla(r)\otimes \Delta(s)$. We shall show here that the indecomposable Clebsch-Gordan modules are certain quotients of the indecomposable tilting modules.  To this end it is useful to note that these quotients are indecomposable.  This follows from the well-known observation recorded in Lemma 3.1  below. 

 \q In the proof we shall need Steinberg's tensor product. We write $G_1$ for the first infinitesimal subgroup of $G$. The modules $L(0),L(1),\ldots,L(p-1)$ form a complete set of pairwise non-isomorphic simple  $G_1$-modules. We write $F:G\to G$ for the usual Frobenius map and, for a rational $G$-module $V$ affording the representation $\pi:G\to \GL(V)$, write $V^{[1]}$  for the $K$-space $V$ regarded as a $G$-module via the representation $\pi\circ F$.  For $r\geq 0$ we  write $r=r_0+pr'$, with $0\leq r_0\leq p-1$, we have $L(r)=L(r_0)\otimes L(r')^{[1]}$, by Steinberg's tensor product theorem.

\begin{lemma} For $r\geq 0$, the tilting module $T(r)$ has simple head.

\end{lemma}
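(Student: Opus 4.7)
The plan is to prove that $T(r)$ has simple head by induction on $r$, combining Donkin's tensor product theorem for tilting modules of $\SL_2$ with Steinberg's tensor product theorem for simples. The base case $0 \leq r \leq p - 1$ is immediate since $T(r) = L(r)$ is already simple.

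For $r \geq p$, I would write $r = a + pb$ uniquely with $p - 1 \leq a \leq 2p - 2$ (so that $0 \leq b < r$) and invoke the decomposition $T(r) \cong T(a) \otimes T(b)^{[1]}$ from Donkin's theorem. The first key step is to show that the restriction of $T(a)$ to $G_1$ is the indecomposable $G_1$-injective envelope $Q_1(\mu_a)$ of the $G_1$-simple $L(\mu_a)$ for $\mu_a = 2p - 2 - a$. For $a = p - 1$ this is clear since $T(p - 1) = L(p - 1)$ is the Steinberg module, which is $G_1$-injective. For $a \in [p, 2p - 2]$ it follows from $\dim T(a) = 2p = \dim Q_1(\mu_a)$ together with the observation that $T(a)$ inherits $G_1$-injectivity from being a summand of a tensor product involving the Steinberg module.

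With this in hand, the $G$-head of $T(r) = T(a) \otimes T(b)^{[1]}$ can be extracted from the $G_1$-structure. Since $T(a)$ is $G_1$-injective and $T(b)^{[1]}$ is $G_1$-trivial, the Hochschild--Serre spectral sequence for $G_1 \triangleleft G$ degenerates, and Steinberg's parametrization $L(\mu + p\nu) = L(\mu) \otimes L(\nu)^{[1]}$ for $0 \leq \mu \leq p - 1$ gives
\[
\Hom_G\bigl(T(r),\ L(\mu) \otimes L(\nu)^{[1]}\bigr) \;\cong\; \delta_{\mu,\mu_a}\cdot\Hom_G\bigl(T(b), L(\nu)\bigr).
\]
The simple summands of $\hd\, T(r)$ are therefore exactly the $L(\mu_a) \otimes L(\nu)^{[1]}$ with $L(\nu)$ a simple summand of $\hd\, T(b)$; by induction the latter is simple, so the former is too.

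The principal obstacle is the $G_1$-structure analysis of $T(a)$ for $a \in [p - 1, 2p - 2]$ and the subsequent Hom calculation; these belong to the Humphreys--Verma framework for $G_1$-injective rational $G$-modules and serve to reduce the question to Steinberg's tensor product theorem applied to the base case.
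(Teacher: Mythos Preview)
Your proposal is correct and follows essentially the same route as the paper's proof. The paper first dualises (noting tilting modules are self-dual) and works with the socle rather than the head, but otherwise proceeds identically: it uses Donkin's decomposition $T(r)\cong T(m)\otimes T(n)^{[1]}$ for $r=m+pn$ with $p-1\leq m\leq 2p-2$, identifies $T(m)|_{G_1}$ with the $G_1$-injective envelope of $L(2p-2-m)$, and then computes $\Hom_G(L(s),T(r))$ via $\Hom_{G_1}(-,-)^G$ (your Hochschild--Serre step) to reduce to $\Hom_G(L(c),T(n))$ and conclude by induction; your unification of the cases $p\leq r\leq 2p-2$ and $r\geq 2p-1$ by allowing $b=0$ is a harmless simplification.
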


\begin{proof} The dual of a tilting module is tilting (and in fact all  tilting modules for ${\rm SL}_2(K)$ are self dual) so  this is equivalent to the statement that the indecomposable tilting modules have simple socle.   We shall use the  description of the tilting modules for ${\rm SL}_2(K)$   given in \cite{DTilt}.

\q   If $0\leq r\leq p-1$ then $T(r)$ is simple. In case $p\leq r\leq 2p-2$ the tilting module $T(r)$, as a module for $G_1$, is the injective hull of the simple module $L(2p-2-r)$. This has a simple $G_1$-socle and so certainly a simple socle as a $G$-module.

\q Now suppose that $r\geq 2p-1$. We write $r=m+pn$, with $p-1\leq m\leq 2p-2$ and $n>  0$. Then  according to \cite{DTilt} the module $T(r)$ may be realised as $T(m)\otimes T(n)^{[1]}$. The $G_1$-socle, and hence the $G$-socle of $T(m)$ is $L(a)$, where $a=2p-2-m$. Hence if, for $s=b+pc$, with $0\leq b\leq p-1$, $c\geq 0$, the module $L(s)$ appears in the socle of $T(r)$ then we have $b=a$.  The multiplicity of $L(s)$ in the $G$-socle is, by Schur's Lemma, the dimension of $\Hom_G(L(s),T(r))$.  Now we have
\begin{align*}\Hom_G(&L(s),T(r))=\Hom_{G_1}(L(s),T(r))^G\cr
&=\Hom_{G_1}(L(a)\otimes L(c)^{[1]},T(m)\otimes T(n)^{[1]})^G\cr
&=(\Hom_{G_1}(L(a),T(m))\otimes \Hom_K(L(c)^{[1]},T(n)^{[1]}))^G.
\end{align*}

\q Now $T(m)$ has $G_1$-socle $L(a)$ so that $\Hom_{G_1}(L(a),T(m))$ is one dimensional and hence trivial as a $G$-module. Hence we have 
\begin{align*}\Hom_G(L(s),T(r))&= \Hom_K(L(c)^{[1]},T(n)^{[1]}))^G\cr
&=\Hom_G(L(c)^{[1]},T(n)^{[1]})=\Hom_G(L(c),T(n)).
\end{align*}
We may assume inductively that $T(n)$ has simple socle $L(d)$, for some $d\geq 0$ and so we obtain
$$\Hom_G(L(s),T(r))=\begin{cases}K, & \hbox{ if } s=a+pd;\cr
0, & \hbox{ otherwise}
\end{cases}$$
which proves that $T(r)$ has simple socle $L(a+pd)$.
\end{proof}

\q Let $r\in \eno$, written in standard form $r=(p^N-1)+\sigma$. We put $S(r)=\{i\in \eno\vert i<N, \sigma_i\neq 0\}$. The sections in a good filtration of $T(r)$ are, by Proposition 1.3,  the modules $\nabla(r-2\sigma_I)$.   We define a total order on the power set of $S(r)$ by the condition $I\varleq J$ if $\sigma_I\leq \sigma_J$.  Thus we have $r-2\sigma_I\leq r-2\sigma_J$ if and only if $J\varleq I$.

\begin{remark} In fact, it is easy to see that $I\varleq J$ if and only if $\max(I\backslash J)\leq \max(J\backslash I)$ (where $\max(A)$ denotes the maximum of a finite set of non-negative integers $A$).
\end{remark}
\bs

\q Let $I\subseteq S(r)$ and put $\pi(I)=\{i\in \eno\vert i<r-2\sigma_I\}$. We define $T(r)_I=T(r)/O_{\pi(I)}(T(r))$.

\begin{lemma}  (i)  $O_{\pi(I)}(T(r))$ has a good filtration with sections $\nabla(r-2\sigma_J)$, for $r-2\sigma_J<r-2\sigma_I$, i.e., $I\neq J$, $I \varleq J$; 

(ii) $T(r)_I$ has a good filtration with sections $\nabla(r-2\sigma_J)$, for $r-2\sigma_J\geq r-2\sigma_I$, i.e.,  $J\varleq I$; and 

(iii) $T(r)_\emptyset=\nabla(r)$ and $T(r)_{S(r)}=T(r)$. 

\end{lemma}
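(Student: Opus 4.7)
The plan is to reduce all three parts to a direct application of Lemma~2.4, applied to $V=T(r)$ with the saturated set $\pi=\pi(I)$, using Proposition~1.3 to pin down the good filtration multiplicities of $T(r)$.

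First I would note the two preliminary observations that make Lemma~2.4 applicable. The set $\pi(I)=\{i\in\eno\vert i<r-2\sigma_I\}$ is a downward closed subset of $\eno$ and is therefore saturated. Next, since the standard form is $r=(p^N-1)+\sigma$ with $\sigma_i=0$ for $i>N$, the set $\{i\in\eno\vert i\neq N,\sigma_i\neq 0\}$ appearing in Proposition~1.3 coincides with $S(r)=\{i<N\vert \sigma_i\neq 0\}$. Hence the good filtration of $T(r)$ has exactly one section $\nabla(r-2\sigma_J)$ for each subset $J\subseteq S(r)$, and no others.

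For part (i), I would apply Lemma~2.4(i): a section $\nabla(r-2\sigma_J)$ of $T(r)$ contributes to $O_{\pi(I)}(T(r))$ precisely when $r-2\sigma_J\in\pi(I)$, i.e.\ when $\sigma_J>\sigma_I$. By the definition of the total order $\varleq$, this is exactly the condition $I\varleq J$ with $I\neq J$. For part (ii), I would apply Lemma~2.4(ii) in the same way: a section survives in the quotient $T(r)_I$ precisely when $r-2\sigma_J\notin\pi(I)$, i.e.\ when $\sigma_J\leq\sigma_I$, which translates to $J\varleq I$.

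For part (iii) I would specialise. Taking $I=\emptyset$, the condition $J\varleq\emptyset$ forces $\sigma_J\leq 0$, hence $J=\emptyset$, so $T(r)_\emptyset$ has a single good filtration section $\nabla(r)$ and is therefore isomorphic to $\nabla(r)$. Taking $I=S(r)$, every $J\subseteq S(r)$ satisfies $J\varleq S(r)$, so $T(r)_{S(r)}$ inherits every section of $T(r)$; comparing characters (equivalently dimensions) with the surjection $T(r)\twoheadrightarrow T(r)_{S(r)}$ forces $O_{\pi(S(r))}(T(r))=0$ and hence $T(r)_{S(r)}=T(r)$. There is no real obstacle here: the argument is essentially bookkeeping, translating the order $\varleq$ into weight inequalities so that Lemma~2.4 can be invoked. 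The only point requiring care is that $O_{\pi(I)}(T(r))$ itself carries a good filtration, but this is implicit in the formulation of Lemma~2.4 (via the references cited there).
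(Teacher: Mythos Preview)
Your argument is correct and is exactly the routine verification the paper leaves implicit (the lemma is stated without proof there); the key inputs are indeed Proposition~1.3 for the good filtration sections of $T(r)$ and the truncation lemma for $O_\pi$ applied to the saturated set $\pi(I)$. One small correction: the result you are invoking is Lemma~2.3 in the paper's numbering, not Lemma~2.4 (which is a corollary), so adjust the cross-references accordingly.
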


\begin{lemma}  For $r,s\in \eno$ and $I\subseteq S(r)$, $J\subseteq S(s)$ the modules $T(r)_I$ and $T(s)_J$ are isomorphic if and only if $r=s$ and $I=J$.

\end{lemma}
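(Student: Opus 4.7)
The plan is to recover both $r$ and $I$ from the isomorphism class of $T(r)_I$ by examining its good filtration as supplied by Lemma 3.3(ii).

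First I would read off $r$ as the highest weight of $T(r)_I$. By Lemma 3.3(ii), the sections of a good filtration of $T(r)_I$ are the modules $\nabla(r-2\sigma_K)$ with $K\varleq I$, and since $\sigma_\emptyset=0$ and $\emptyset\varleq I$ the section $\nabla(r)$ itself always appears while every other section has strictly smaller highest weight. Thus $r$ is the maximum weight of $T(r)_I$; the same holds for $s$ and $T(s)_J$, so an isomorphism $T(r)_I\cong T(s)_J$ immediately forces $r=s$.

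Assuming $r=s$ (so also $S(r)=S(s)$), I would then compare the $\nabla$-multiplicities in the two good filtrations. Characters are isomorphism invariants and, for a module with a good filtration, determine the multiplicities $(\,\cdot\,:\nabla(t))$, so Lemma 3.3(ii) applied on both sides yields the set equality
$$\{r-2\sigma_K \mid K\subseteq S(r),\ K\varleq I\}=\{r-2\sigma_K \mid K\subseteq S(r),\ K\varleq J\}.$$

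Finally I would argue that this equality forces $I=J$. The key observation is that $\varleq$ is in fact a total order on the power set $2^{S(r)}$; the only non-trivial point is antisymmetry, which follows from uniqueness of base-$p$ expansions: for $i\in S(r)$ the digit $\sigma_i$ lies in $\{1,\dots,p-1\}$, so $\sigma_K$ determines $K$. Consequently $K\mapsto\sigma_K$ is injective and the displayed equality reduces to $\{K : K\varleq I\}=\{K : K\varleq J\}$; being initial segments of the same totally ordered set, these coincide exactly when their maxima do, forcing $I=J$. The proof is essentially book-keeping on top of Lemma 3.3, and the one substantive point — the base-$p$ uniqueness that makes $\varleq$ antisymmetric — is routine rather than a genuine obstacle.
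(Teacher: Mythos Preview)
Your proof is correct and follows essentially the same approach as the paper: read off $r$ as the unique highest weight, then use Lemma 3.3(ii) together with the fact that $\varleq$ is a total order (equivalently, that $K\mapsto\sigma_K$ is injective on $2^{S(r)}$) to recover $I$. The paper phrases the second step slightly more tersely---noting that $\nabla(r-2\sigma_J)$ occurring as a section of $T(r)_I$ forces $J\varleq I$, then invoking symmetry---but this is the same initial-segment argument you spell out, and you have the added virtue of actually justifying the antisymmetry of $\varleq$ rather than asserting it.
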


\begin{proof}   The module $T(r)_I$ unique highest weight $r$ and the module $T(s)_I$ has unique highest weight $s$ so that if $T(r)_I$ and $T(s)_J$ are isomorphic then $r=s$. Now $T(r)_I$ has section $\nabla(r-2\sigma_I)$ and if $\nabla(r-2\sigma_J)$ is also a section then we have $J\varleq I$.  Thus if $T(r)_I$ and $T(r)_J$ are isomorphic then $J\varleq I$, and by symmetry $I\varleq J$. Hence $I=J$.
\end{proof}

We summarise our findings in our main result.

\begin{theorem} Let $r\geq s\geq 0$. Then each  indecomposable summand of $\nabla(r)\otimes \Delta(s)$ occurs at most once in a direct sum decomposition. Each indecomposable summand has the form $T(m)_I$, for some $m\geq 0$, which we write in standard form  $m=(p^N-1)+\sigma$,  and where  $I$  is a subset of  $S(m)$.  Furthermore, for such $m$ and $I$, we have $(\nabla(r)\otimes \Delta(s) \vert T(m)_I)\neq 0$ if and only if $(Y(r+s)\vert T(m))\neq 0$ (and the condition for this is described in Theorem 1.7), $m-2\sigma_I\geq r-s$ and $\sigma_I\geq \sigma_J$ for every $J \subseteq S(m)$ such that $ m-2\sigma_J \geq r-s$.
\end{theorem}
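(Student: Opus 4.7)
The plan is to reduce the problem to an analysis of tilting summands of $Y(r+s)$ via Corollary 2.4, and then to identify the image of each such summand in the quotient $Y(r+s)/O_\pi(Y(r+s))$ as one of the modules $T(m)_I$.

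The multiplicity-at-most-one statement is immediate from Remark 1.1. For the structural part, set $\pi = \{i \in \eno : i < r-s\}$, which is saturated. Corollary 2.4 gives
$$\nabla(r)\otimes\Delta(s) \cong Y(r+s)/O_\pi(Y(r+s)),$$
while Theorem 1.7 together with Remark 1.1 yields
$$Y(r+s) = \bigoplus_{m\in M} T(m), \qquad M = \{m : (Y(r+s)\vert T(m))\neq 0\}.$$
Since $O_\pi$ is characterized as the largest submodule whose composition factors have highest weight in $\pi$, it commutes with direct sums, so
$$\nabla(r)\otimes\Delta(s) \cong \bigoplus_{m\in M} T(m)/O_\pi(T(m)).$$

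The core step is to identify each nonzero factor $T(m)/O_\pi(T(m))$ as some $T(m)_I$. Writing $m = (p^N - 1) + \sigma$ in standard form, Proposition 1.3 gives $T(m)$ a good filtration with sections $\nabla(m - 2\sigma_J)$ indexed by $J \subseteq S(m)$. Lemma 2.3 then realizes $O_\pi(T(m))$ as the submodule whose good-filtration sections are the $\nabla(m - 2\sigma_J)$ with $m - 2\sigma_J < r-s$. If $m < r-s$ this accounts for every $J$ and the quotient vanishes; otherwise the complementary set $\Sigma = \{J \subseteq S(m) : m - 2\sigma_J \geq r-s\}$ contains $\emptyset$, and because $\varleq$ is a total order on $2^{S(m)}$, $\Sigma$ has a unique maximum element $I$, namely the $J$ with $\sigma_J$ largest subject to $\sigma_J \leq (m-(r-s))/2$.

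I would then verify $O_\pi(T(m)) = O_{\pi(I)}(T(m))$ as submodules of $T(m)$. The inclusion $\subseteq$ holds because $m - 2\sigma_I \geq r-s$ gives $\pi \subseteq \pi(I)$. For the reverse, the maximality of $\sigma_I$ forces any $J$ with $\sigma_J > \sigma_I$ to satisfy $\sigma_J > (m-(r-s))/2$, and hence $m - 2\sigma_J < r-s$; thus every composition factor of every section of $O_{\pi(I)}(T(m))$ has highest weight in $\pi$, giving $O_{\pi(I)}(T(m)) \subseteq O_\pi(T(m))$. Consequently $T(m)/O_\pi(T(m)) \cong T(m)_I$. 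By Lemma 3.1 the module $T(m)$ has simple head, so its nonzero quotient $T(m)_I$ does too and is therefore indecomposable; Lemma 3.4 then ensures the various $T(m)_I$ appearing in the decomposition are pairwise non-isomorphic. The conditions on $(m, I)$ stated in the theorem follow by direct translation: $(Y(r+s)\vert T(m)) \neq 0$ (described explicitly by Theorem 1.7), $m - 2\sigma_I \geq r-s$, and $\sigma_I$ maximal with this property among $J \subseteq S(m)$. The principal obstacle is exactly the submodule identification $O_\pi(T(m)) = O_{\pi(I)}(T(m))$: these two operators are a priori defined by different weight cutoffs, and equality rests on the specific maximality characterization of $I$ within the total order $\varleq$.
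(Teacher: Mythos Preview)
Your proof is correct and follows essentially the same route as the paper: reduce to $Y(r+s)/O_\pi(Y(r+s))$ via Corollary 2.4, split $Y(r+s)$ into tilting summands $T(m)$, and identify each nonzero quotient $T(m)/O_\pi(T(m))$ with $T(m)_I$ for the $\varleq$-maximal $I$ satisfying $m-2\sigma_I\geq r-s$. In fact you supply more detail than the paper does at the key step, namely the verification $O_\pi(T(m))=O_{\pi(I)}(T(m))$ via the two inclusions, whereas the paper simply asserts $T(m)/O_\pi(T(m))=T(m)_I$ once $I$ has been chosen.
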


\begin{proof} The first two statements are true by virtue of  Remark 1.1 and Lemma 3.4.  We decompose $Y(r+s)$ as $\bigoplus_{m\in M} T(m)$. The set $M$ is described  by Theorem 1.7.  Let $\pi=\{i\in \eno \vert i<r-s\}$.  Then we have
$$\nabla(r)\otimes \Delta(s)=Y(r+s)/O_\pi(Y(r+s))=\bigoplus_{m\in M} T(m)/O_\pi(T(m)).$$
Let $m\in M$.  Suppose $T(m)/O_\pi(T(m)) \neq 0$. Then $(T(m)/O_\pi(T(m)):\nabla(m))\neq 0$ and hence $m\not\in \pi$, i.e., $m\geq r-s$. We choose $I\subset S(m)$ such that $m-2\sigma_I\geq r-s$ and $\sigma_I\geq \sigma_J$ for every $J \subseteq S(m)$ such that $ m-2\sigma_J \geq r-s$. Then $m-2\sigma_I \leq m-\sigma_J$ for all $J\in S(m)$ such that $m-2\sigma_J \geq r-s$. We then have $T(m)/O_\pi(T(m)) =T(m)_I$, as required.

\end{proof}

\q Combining this with Lemma 3.4 we have the following result.

\begin{corollary}   The modules $T(r)_I$, for $r\in \eno$, $I\subseteq S(r)$, form a complete set of pairwise indecomposable non-isomorphic Clebsch-Gordan modules.

\end{corollary}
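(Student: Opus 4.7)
The plan is to assemble ingredients that are already in place, verifying four points: each $T(r)_I$ is non-zero and indecomposable; each $T(r)_I$ is a Clebsch-Gordan module; distinct pairs $(r,I)$ yield non-isomorphic modules; and every indecomposable Clebsch-Gordan module arises this way.

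First I would check non-triviality and indecomposability. By Lemma 3.3(ii), $T(r)_I$ admits a good filtration whose sections include $\nabla(r)$ (take $J=\emptyset$, so that $\emptyset\varleq I$ holds trivially), and in particular $T(r)_I\neq 0$. Lemma 3.1 tells us that $T(r)$ has simple head, and a standard argument shows that any non-zero quotient of a module with simple head also has simple head: if $V=V/\rad V$ is simple and $M\subsetneq V$, then $\hd(V/M)=V/(\rad V + M)$ is a non-zero quotient of $\hd V$, hence isomorphic to it. A module with simple head admits no non-trivial direct sum decomposition, so $T(r)_I$ is indecomposable.

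Next I would verify the Clebsch-Gordan property. By Remark 1.4, $T(r)$ is a direct summand of $Y(r)$, so $T(r)$ is itself an indecomposable Clebsch-Gordan module. The set $\pi(I)=\{i\in\eno \mid i<r-2\sigma_I\}$ (interpreted as $\emptyset$ when $r-2\sigma_I\leq 0$, in which case $T(r)_I=T(r)$) is saturated, so Corollary 3.6 applied to $C=T(r)$ shows that $T(r)_I=T(r)/O_{\pi(I)}(T(r))$ is a Clebsch-Gordan module. Together with the previous paragraph, this makes each $T(r)_I$ an indecomposable Clebsch-Gordan module.

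Finally, pairwise non-isomorphism is exactly Lemma 3.4, while completeness follows from Theorem 3.5: an indecomposable Clebsch-Gordan module $V$ is by definition a summand of some $\nabla(r')\otimes\Delta(s')$ with $r'\geq s'\geq 0$, and Theorem 3.5 describes every such summand in the required form $T(m)_I$. There is no real obstacle; the only points requiring mild care are the degenerate cases where $r-2\sigma_I\leq 0$ (covered by the convention $\pi(I)=\emptyset$) and confirming that Lemma 3.3(ii) genuinely produces the section $\nabla(r)$ for every allowable $I$ so that $T(r)_I$ is never accidentally zero.
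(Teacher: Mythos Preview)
Your argument is correct and follows the paper's approach, which is the one-line ``Combining this with Lemma 3.4'' preceding the corollary: indecomposability comes from Lemma~3.1 (simple head passes to non-zero quotients), pairwise non-isomorphism is Lemma~3.4, and completeness is Theorem~3.5; you have simply made explicit the step that every $T(r)_I$ is itself Clebsch-Gordan via Corollary~2.6, whereas the paper leaves this implicit in Theorem~3.5 (one recovers $T(m)_I$ as a summand of $\nabla(r')\otimes\Delta(s')$ with $r'+s'=m$ and $r'-s'=m-2\sigma_I$). Two reference slips to fix: ``Remark~1.4'' should be Remark~1.2, and ``Corollary~3.6'' (which is the result you are proving) should be Corollary~2.6.
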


\begin{example} We ask when the trivial module  $T(0)$ is a summand of $\nabla(r)\otimes \Delta(s)$ (with $r\geq s\geq 0$).  Then we have $0\geq r-s$ so that $r=s$.   The standard expression for $0$ is $0=(p^0-1)+0$  so we have $N=0$, $\sigma=0$. So, putting $u=(r+s-0)/2=r$, we have $r_0<p-1$, i.e., $s \not\equiv 0$ (modulo $p$). This is the condition for $\nabla(r)=S^rE$ to have dimension prime to $p$, so this is a special case of the result of Benson and Carlson, \cite[Theorem 3.1.9]{Benson}. 
\end{example}

\begin{example} We ask when the natural  module $E=T(1)$ is a summand of $\nabla(r)\otimes \Delta(s)$, for $r\geq s$. For this we require $1\geq  r-s$ so that $r=s+1$ and $T(1)$ is a direct summand of $Y(2s+1)$.  

\q  If $p=2$ the we have the standard expression $1=(2^1-1)+0$ so that $N=1$, $\sigma=0$ and, putting $u=(r+s-1)/2=s$, the condition is $s_1<1$, i.e., $s+1=0$, i.e., $s \equiv 0$ or $1$ modulo $4$.

\q If $p>2$ we have the standard expression $1=(p^0-1)+1$ so that $N=0$ and $\sigma=1$.  Again putting $u=(r+s-1)/2=s$ the condition is now $s_0+1\leq p-1$ and $s_1<p-1$, i.e., $s_0\not\equiv -2,-1$ (modulo $p$) and $s_1\not\equiv -1$ (modulo $p$).
\end{example}

\begin{example}  Suppose that $\nabla(t)$ is a direct summand of of $\nabla(r)\otimes \Delta(s)$, with $r\geq s$. Then we have $t\geq r-s$  with $r+t-s$ even. We write $t$ in its standard expression $t=(p^N-1)+\sigma$ 
and set $u=(r+s-t)/2$.  If $\sigma\neq 0$ we require $\sigma-p^m\sigma_m<r-s$, where $\sigma_m\neq 0$ and $\sigma_i=0$ for $i<m$. To summarise, with the above notation, $\nabla(t)$ is a summand of $\nabla(r)\otimes \Delta(s)$ if and only if $t\geq r-s$, $r+s-t$ is even, $\sigma_i+u_i\leq p-1$ for $0\leq i<N$, $\sigma_N+u_N<p-1$ and if $\sigma\neq 0$ then $\sigma-p^m\sigma_m<r-s$.
\end{example}

 \end{document}